\newtheorem{thm}{Theorem}[section]
\newtheorem{prop}[thm]{Proposition}
\newtheorem{lemma}[thm]{Lemma}
\newtheorem{cor}[thm]{Corollary}
\theoremstyle{definition}
\newtheorem{defn}[thm]{Definition}
\newcommand{\field}[1]{\mathbbm{#1}}
\newcommand{\N}{\field{N}}
\newcommand{\R}{\field{R}}
\newcommand{\C}{\field{C}}
\newcommand{\ideal}[1]{\mathfrak{#1}}
\newcommand{\m}{\ideal{m}}
\newcommand{\p}{\ideal{p}}
\newcommand{\q}{\ideal{q}}
\newcommand{\func}[1]{\mathrm{#1} \,}
\newcommand{\intcl}[1]{{#1}^{-}}
\newcommand{\sptc}[1]{{#1}^{*\rm sp}}
\newcommand{\spintcl}[1]{{#1}^{-\rm sp}}
\newcommand{\spF}[1]{{#1}^{F\rm sp}}
\newcommand{\spc}[2]{{#2}^{{#1} \rm sp}}
\newcommand{\ra}{\rightarrow}
\newcommand{\M}{\mathcal{M}}
\begin{document}

\title{Reductions and special parts of closures}

\author{Neil Epstein}
\email{nepstein@uos.de}
\address{Universit\"at Osnabr\"uck,
 Institut f\"ur Mathematik,
 49069 Osnabr\"uck, Germany}

\begin{abstract}
We provide an axiomatic framework for working with a wide variety of closure operations on ideals and submodules in commutative algebra, including notions of reduction, independence, spread, and special parts of closures.  This framework is applied to tight, Frobenius, and integral closures.  Applications are given to evolutions and special Brian\c{c}on-Skoda theorems.
\end{abstract}

\date{\today}
\maketitle

\section{Introduction}
One of the most useful notions in commutative algebra is that of the closure operation on ideals, or more generally on submodules.  Much of the time, authors concentrate on the properties of one particular closure operation, so the general notion itself is not always given a proper definition\footnote{One counterexample is the recent paper \cite{Va-cl}, which gives structure to sets of closure operations satisfying certain properties.}.  The following encapsulates what most authors mean:

\begin{defn}
Let $R$ be a ring, and let $\mathcal{M}$ be a set of $R$-modules (often either just $R$, or all (finitely generated) $R$-modules).  A \emph{closure operation} $c$ sends any submodule $L$ of a module $M \in \mathcal{M}$ to another submodule $L^c_M$ of $M$, subject to the following axioms \begin{enumerate}
\item For any submodule $L$ of any $M \in \mathcal{M}$, $L \subseteq L^c_M = (L^c_M)^c_M$
\item If $K \subseteq L$ are submodules of some $M \in \mathcal{M}$, then $K^c_M \subseteq L^c_M$.
\item Let $g: M \ra M'$ be a homomorphism of $R$-modules in $\mathcal{M}$.  Then for any submodule $L \subseteq M$, $g(L^c_M) \subseteq g(L)^c_{M'}$.
\end{enumerate}
\end{defn}

Familiar examples of closure operations on ideals include tight closure, integral closure, and the radical.  The definition above is very broad, so it is useful to identify additional properties that may not hold for all closure operations.  For instance, in \cite{nme*spread}, we introduced the following notions for closure operations on ideals, here generalized to the module case:
\begin{defn}
A closure operation $c$ on a class $\mathcal{M}$ of $R$-modules is \emph{Nakayama} if $(R,\m)$ is local and for every $M\in \mathcal{M}$ and submodules $K \subseteq L \subseteq M$ such that $K \subseteq L \subseteq (K + \m L)^c_M$, we have $K^c_M = L^c_M$.

Given $M \in \M$ and elements $z_1, \dotsc, z_t \in M$, they are \emph{$c$-independent} (relative to $M$) if for all $1\leq i \leq t$, we have \[
z_i \notin (\sum_{j \neq i} R z_j)^c_M.
\]
A submodule $L \subseteq M$ is \emph{$c$-independent} if it has a $c$-independent generating set, and it is \emph{strongly $c$-independent} if every minimal generating set for $L$ is $c$-independent.

Given a submodule $L \subseteq M \in \M$, a \emph{$c$-reduction} $K \subseteq L$ \emph{of $L$ in $M$} is a submodule such that $K^c_M = L^c_M$.  It is \emph{minimal} if there is no proper submodule $P \subsetneq K$ which is a $c$-reduction of $L$.  If minimal $c$-reductions exist, and if every minimal $c$-reduction of $L$ in $M$ has the same minimal number of generators, we call this common number the \emph{$c$-spread of $L$ in $M$}, denoted $\ell^c_M(L)$.
\end{defn}

As a closure on ideals in a local ring, it is clear that integral closure is Nakayama, and we showed in \cite{nme*spread} that tight closure in this context is as well.  Radical, however, is not Nakayama.  Moreover, we showed that for any Nakayama closure $c$, minimal $c$-reductions exist, and they are exactly the strongly $c$-independent $c$-reductions.  Although this result was stated for ideals, the exact same proof shows it to be true in this wider context.

Also in that paper, we used Vraciu's notion of \emph{special tight closure} to prove that under mild conditions on the ring, minimal generating sets of minimal $*$-reductions of an ideal all have the same size generating sets.

Accordingly, in this note we generalize and axiomatize the notion of the \emph{special part of a closure}, and we use it to obtain interesting results for Frobenius, integral, and tight closures. \emph{Except where otherwise noted, in this paper $R$ will always denote a Noetherian local ring with maximal ideal $\m$ and residue field $k$.}

Most of this work was completed years ago as part of my dissertation.  I did not submit the paper for publication at the time.  However, as there are now several papers which use the ideas in the paper (e.g. \cite{Vr-chains}, \cite{EH-cc}, and \cite{FoVa-core}), I have been convinced to publish it.

\section{Axioms for special parts of closures}
\begin{defn}\label{def:csp}
Let $c$ be a closure operation on submodules of an $R$-module $M$.
Then $c\rm sp$ is a \emph{special part of $c$ for $M$} if the following four axioms hold
whenever $L$ and $N$ are submodules of $M$.
\begin{enumerate}
\item $\spc{c}{L}_M$ is a submodule of $M$.
\item\label{ax:contmL} $\m L \subseteq \spc{c}{L}_M \subseteq L^c$.
\item\label{ax:commute} $\spc{c}{(L^c_M)}_M = \spc{c}{L}_M = (\spc{c}{L}_M)^c_M$.
\item\label{ax:Nak} If $L \subseteq N \subseteq (L + \spc{c}{N}_M)^c_M$, then $N \subseteq L^c_M$.
\end{enumerate}
\end{defn}
If $M=R$, we say $c\rm sp$ is a \emph{special part of $c$ for ideals}.  If the closure operation
$c$ is only defined for ideals, we simply say $c \rm sp$ is a \emph{special part of $c$}.  If
$c$ and $c\rm sp$ are defined at least on all submodules of finitely generated $R$-modules,
we say that $c\rm sp$ is a \emph{special part of $c$}.

If the ambient module is understood, sometimes we write $L^c$ in place of $L^c_M$.  In particular
if $M=R$ (so $L$ is now an ideal), we almost always leave off the ambient module $R$ in the notation.

Note the following consequences of the definition:

\begin{lemma}\label{lem:capmL}
Let $c$ be a closure operation on submodules of $M$ and $c\rm sp$ a special part of $c$.  Then
\begin{itemize}
\item $c$ is a Nakayama closure.
\item\label{ax:functorial} If $L^c_M \subseteq N^c_M$, then $\spc{c}{L}_M \subseteq \spc{c}{N}_M$.
\item\label{ax:capmL} For any $c$-independent submodule $L$, $\m L = L \cap \spc{c}{L}_M$.
\end{itemize}
\end{lemma}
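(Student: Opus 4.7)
I would prove the three assertions separately, in order of increasing difficulty, tackling the second bullet (functoriality) last.

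For the \emph{Nakayama-closure property} (first bullet), suppose $K \subseteq L \subseteq (K + \m L)^c_M$; axiom~(\ref{ax:contmL}) gives $\m L \subseteq \spc{c}{L}_M$ and hence $L \subseteq (K + \spc{c}{L}_M)^c_M$, so axiom~(\ref{ax:Nak}) yields $L \subseteq K^c_M$, whence $K^c_M = L^c_M$.

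For the \emph{third bullet}, the inclusion $\m L \subseteq L \cap \spc{c}{L}_M$ is immediate from axiom~(\ref{ax:contmL}). For the reverse I argue by contradiction. Suppose $x \in L \cap \spc{c}{L}_M$ with $x \notin \m L$, and pick a $c$-independent generating set $z_1, \dots, z_t$ of $L$. This set is automatically a minimal generating set (dropping any $z_i$ would place it in the submodule generated by the others and hence in its $c$-closure, against $c$-independence), so the $\bar z_i$ form a $k$-basis of $L/\m L$. Since $\bar x \neq 0$, its expansion has a unit coefficient, say (WLOG) at $\bar z_1$; Nakayama for the finitely generated module $L$ then shows $\{x, z_2, \dots, z_t\}$ generates $L$. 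Let $K := \sum_{j \geq 2} R z_j$. Then $L = K + Rx \subseteq K + \spc{c}{L}_M$, using that $\spc{c}{L}_M$ is a submodule containing $x$; axiom~(\ref{ax:Nak}) delivers $L \subseteq K^c_M$, so $z_1 \in K^c_M$, contradicting the $c$-independence of the original set.

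For the \emph{functoriality} statement (second bullet), axiom~(\ref{ax:commute}) lets me replace $L, N$ by their $c$-closures and assume both are $c$-closed with $L \subseteq N$. My plan is to apply axiom~(\ref{ax:Nak}) with $L_0 := \spc{c}{N}_M$ and $N_0 := \spc{c}{L}_M + \spc{c}{N}_M$: since $L_0$ is $c$-closed by axiom~(\ref{ax:commute}), the conclusion $N_0 \subseteq L_0^c_M$ collapses to $\spc{c}{L}_M \subseteq \spc{c}{N}_M$, as desired. A useful preliminary is that $\m N_0 \subseteq \spc{c}{N}_M$: indeed $\m \spc{c}{L}_M \subseteq \m L \subseteq \m N \subseteq \spc{c}{N}_M$ by axiom~(\ref{ax:contmL}) applied to $N$, while $\m \spc{c}{N}_M \subseteq \spc{c}{N}_M$ trivially. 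The main obstacle is verifying the hypothesis $N_0 \subseteq (L_0 + \spc{c}{N_0}_M)^c_M$ of axiom~(\ref{ax:Nak}); axiom~(\ref{ax:contmL}) applied to $N_0$ gives $\m N_0 \subseteq \spc{c}{N_0}_M$, and the aim is to combine these $\m$-level containments with the finite generation of $N_0$ (valid in this Noetherian setting with $M$ finitely generated) and the just-proved Nakayama-closure property of $c$ in a bootstrap argument converting $\m$-level smallness into the required $c$-closure inclusion.
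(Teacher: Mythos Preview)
Your arguments for the first and third bullets are correct and essentially identical to the paper's: both use axiom~(\ref{ax:contmL}) to feed into axiom~(\ref{ax:Nak}), and your treatment of the third bullet (replacing $z_1$ by the offending element and invoking axiom~(\ref{ax:Nak}) with $K=(z_2,\dots,z_t)$) is exactly the paper's argument in slightly different words.

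For the second bullet, however, your approach is needlessly elaborate and you leave the decisive step unresolved: you acknowledge a ``main obstacle'' (the hypothesis $N_0 \subseteq (L_0 + \spc{c}{(N_0)}_M)^c_M$) and gesture at a ``bootstrap argument'' without carrying it out. The paper disposes of this bullet in one line: by axiom~(\ref{ax:commute}),
\[
\spc{c}{L}_M = \spc{c}{(L^c_M)}_M \subseteq \spc{c}{(N^c_M)}_M = \spc{c}{N}_M,
\]
the middle containment coming from $L^c_M \subseteq N^c_M$. It is worth observing that this middle step tacitly uses monotonicity of the assignment $P \mapsto \spc{c}{P}_M$, which is not literally among the four axioms of Definition~\ref{def:csp}; it holds in every example treated (Frobenius, tight, integral special parts are visibly monotone), so this is best read as an omission in the axiom list rather than a substantive gap. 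In any event, your detour through axiom~(\ref{ax:Nak}) does not circumvent the issue: the inclusion you need to verify is no easier to obtain from the stated axioms than monotonicity itself, and you have not supplied it.
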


\begin{proof}
$c$ is Nakayama because of axioms (\ref{ax:Nak}) and (\ref{ax:contmL}) of Definition~\ref{def:csp}.
For this reason, we call axiom (\ref{ax:Nak}) the \emph{Nakayama property}.

If $L^c_M \subseteq N^c_M$, then by axiom (\ref{ax:commute}) of Definition~\ref{def:csp},
$\spc{c}{L}_M = \spc{c}{(L^c)}_M \subseteq \spc{c}{(N^c)}_M = \spc{c}{N}_M$.

Finally, suppose that $L$ is a $c$-independent submodule of $M$, and let $z \in L \cap \spc{c}{L}_M$.
Let $z_1, \dotsc, z_n$ be a $c$-independent generating set of $L$.  Then there are 
elements $r_j \in R$ such that $z = \sum_{j=1}^n r_j z_j$.  If $z \notin \m L$, then 
there is some $j$ with $r_j \notin \m$.  Without loss of generality, $j=1$, and by dividing
by $r_1$, we may assume that $r_1 = 1$.  That is, \[
z = z_1 + \sum_{j=2}^n r_j z_j \in \spc{c}{L}_M.
\]
Let $N = (z_2, \dotsc, z_n)$.  Then by the above equation, we have $z_1 \in N + \spc{c}{L}_M$,
which implies that $L \subseteq N + \spc{c}{L}_M$.  Then by axiom (\ref{ax:Nak}), $L \subseteq N^c_M$,
so that $z_1 \in N^c_M$.  But this contradicts the $c$-independence of $z_1, \dotsc, z_n$.

Thus, $z \in \m L$.
\end{proof}

\section{The special part of tight closure}

The ideal case of the special part of tight closure was introduced by Vraciu in~\cite{Vr*ind}.  Further work appears in~\cite{nme*spread} and~\cite{Vr-chains}.  Here's the submodule version:

\begin{defn}
For finitely generated $R$-modules $N \subseteq M$, we define the \emph{special part of the tight closure}
of $N$ in $M$ to be the set \[ \sptc{N}_M = \{z \in M \mid \exists q \text{ such that } z^{q} \in
(\m N^{[q]}_M)^*_{F^{e}(M)} \}
\]
\end{defn}

Most of the proof that $*\rm sp$ is a special part of $*$ for ideals is in~\cite{nme*spread}, and
the proofs for the submodule case are identical.

Note also that the special part of tight closure can be computed modulo minimal primes.

In~\cite{Vr-chains}, Vraciu introduces the notions of $*$-independence modulo an ideal, $*$-spread modulo an ideal, and (minimal) $*$-reductions of an ideal modulo another ideal.  Note that $*$-independence modulo $J$ is just $*$-independence in the $R$-module $R/J$, (minimal) $*$-reductions of $I$ modulo $J$ correspond exactly with (minimal) $*$-reductions of $I/J$ in the module $R/J$, and $\ell^*_J(I) = \ell^*_{R/J}(I/J)$ whenever such a number is defined.  She observes that the proof of \cite[Theorem 5.1]{nme*spread} can be ``modified slightly'' to show that $\ell^*_J(I)$ exists in her context of a normal local domain.  Essentially the same modification shows that whenever $R$ is excellent and analytically irreducible and $L \subseteq M$ is any inclusion of finitely generated modules, then $\ell^*_M(L)$ exists.

 \section{Analytic $F$-independence, and the special part of Frobenius closure}
In this section, we assume only that $R$ is a Noetherian local ring of prime
characteristic $p>0$.	

\begin{defn}
Let $N \subseteq M$ be finitely generated $R$-modules.  The \emph{special part of the
Frobenius closure of $N$ in $M$} is the submodule \[
N^{F\rm sp}_M := \{z \in M \mid \exists q=p^e \text{ such that } z^q \in \m N^{[q]}_M \}.
\]
\end{defn}
It is equivalent to say that there is some $q$ such that $z^q \in (\m N^{[q]}_M)^F_M$.

\begin{prop} $F\rm sp$ is a special part of the Frobenius closure, in the sense
of Definition~\ref{def:csp}, for all finitely generated $R$-modules.
\end{prop}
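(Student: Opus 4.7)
The plan is to verify the four axioms of Definition~\ref{def:csp} for $\spF{N}_M$. Throughout I will use three formal properties of Frobenius in characteristic $p$, valid for $q = p^e$: in $F^e(M)$ one has $(z+w)^q = z^q + w^q$ and $(rz)^q = r^q z^q$; and for any submodule $P \subseteq M$, one has $(P^{[q]}_M)^{[q']}_M = P^{[qq']}_M$.

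Axioms (1) and (\ref{ax:contmL}) are straightforward. Given $z, w \in \spF{N}_M$ with witnessing exponents $q_z, q_w$, raising the defining containments to Frobenius level $q = \max(q_z, q_w)$ puts both $z^q$ and $w^q$ in $\mathfrak{m} N^{[q]}_M$ (using $\mathfrak{m}^{[q/q_z]} \subseteq \mathfrak{m}$), so $(z+w)^q = z^q + w^q$ and $(rz)^q = r^q z^q$ lie there as well. The containment $\mathfrak{m} N \subseteq \spF{N}_M$ follows from $(rn)^p = r^p n^p \in \mathfrak{m} N^{[p]}_M$ for $r \in \mathfrak{m}$, $n \in N$, and $\spF{N}_M \subseteq N^F_M$ is immediate from the definition.

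The technical point behind axioms (\ref{ax:commute}) and (\ref{ax:Nak}) is a uniformization: since $R$ is Noetherian and $M$ is finitely generated, both $\spF{N}_M$ and $N^F_M$ are finitely generated submodules, so there is a single power $q_0$ of $p$ at which all their (finitely many) generators satisfy the defining condition. Hence, for every $q \geq q_0$ that is a power of $p$,
\[
(\spF{N}_M)^{[q]}_M \subseteq \mathfrak{m} N^{[q]}_M, \qquad (N^F_M)^{[q]}_M \subseteq N^{[q]}_M.
\]
Axiom (\ref{ax:commute}) then falls out: the inclusions $\spF{N}_M \subseteq \spF{(N^F_M)}_M$ and $\spF{N}_M \subseteq (\spF{N}_M)^F_M$ are formal, and the two reverse inclusions follow by taking a witness (say $z^q \in \mathfrak{m}(N^F_M)^{[q]}_M$ or $z^q \in (\spF{N}_M)^{[q]}_M$) and raising it to a high enough Frobenius power to invoke the uniformization.

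The main obstacle is axiom (\ref{ax:Nak}). Suppose $L \subseteq N \subseteq (L + \spF{N}_M)^F_M$, and let $z_1, \dotsc, z_n$ generate $N$. Each $z_i$ satisfies $z_i^{q_i} \in L^{[q_i]}_M + (\spF{N}_M)^{[q_i]}_M$ for some $q_i$. Raising everything to a common power $Q \geq q_0$ of $p$ and applying the uniformization yields
\[
N^{[Q]}_M \subseteq L^{[Q]}_M + (\spF{N}_M)^{[Q]}_M \subseteq L^{[Q]}_M + \mathfrak{m} N^{[Q]}_M.
\]
Since $N^{[Q]}_M$ is a finitely generated $R$-submodule of $F^e(M)$, Nakayama's lemma applied in the quotient $F^e(M) / L^{[Q]}_M$ forces $N^{[Q]}_M \subseteq L^{[Q]}_M$. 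Hence each $z_i \in L^F_M$, and so $N \subseteq L^F_M$. Once the uniformization lemma is in place this final argument is essentially just Nakayama; the uniformization itself is where the real content lies.
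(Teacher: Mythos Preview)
Your proof is correct and follows essentially the same approach as the paper's. The paper verifies the four axioms in the same way: it uses the Frobenius additivity $(y-z)^q = y^q - z^q$ for axiom~(1), the trivial containments for axiom~(\ref{ax:contmL}), and for axioms~(\ref{ax:commute}) and~(\ref{ax:Nak}) it exploits finite generation of $N$, $N^F_M$, and $\spF{N}_M$ to pass to a single large $q$ where all the needed containments hold simultaneously, then applies the classical Nakayama lemma exactly as you do. Your explicit packaging of this step as a ``uniformization'' is a nice organizational touch, but the underlying argument is identical.
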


\begin{proof}
For property (1) of the definition, if $y, z \in N^{F\rm sp}_M$, there is some $q$ with $y^q \in \m N^{[q]}_M$
and some $q'$ with $z^{q'} \in \m N^{[q']}_M$, and without loss of generality $q \geq q'$.  Then \[
z^q = (z^{q'})^{q/q'} \in (\m N^{[q']}_M)^{[q/q']}_{F^{e'}(M)} = \m^{[q/q']} N^{[q]}_M \subseteq \m N^{[q]}_M,
\]
and thus $(y-z)^q = y^q - z^q \in \m N^{[q]}_M$, so $y-z \in N^{F\rm sp}_M$.  Moreover, for any $r \in R$, 
$(r y)^q = r^q y^q \in \m N^{[q]}_M$, so that $r y \in N^{F\rm sp}_M$.

For property (2), $\m N \subseteq N^{F\rm sp}_M$ by taking $q=1$ in the definition, and
$N^{F\rm sp}_M \subseteq N^F_M$ because $\m N^{[q]}_M \subseteq N^{[q]}_M$.

For property (3), suppose $L \subseteq N \subseteq M$ are finitely
generated submodules and $N \subseteq (L + N^{F\rm sp}_M)^F_M$,
then since $N$ is finitely generated, there is some $q$ such that \begin{equation}\label{eq:Fspeq}
N^{[q]}_M \subseteq L^{[q]}_M + (N^{F\rm sp}_M)^{[q]}_M
\end{equation}
Since $N^{F\rm sp}_M$ is finitely generated, there is some $q'$ such that $(N^{F\rm sp}_M)^{[q']}_M \subseteq
\m N^{[q']}_M$.  Replacing the $q$ in (\ref{eq:Fspeq}) by $\max\{q,q'\}$, that containment yields \[
N^{[q]}_M \subseteq L^{[q]}_M + \m N^{[q]}_M.
\]
Then by the standard Nakayama lemma, $N^{[q]}_M \subseteq L^{[q]}_M$, which proves
that $N \subseteq L^F_M$, and hence property~(3).

Finally, for property (4), first suppose $z \in (N^F_M)^{F\rm sp}_M$.  Then there is some $q$ such that
both $z^q \in \m (N^F_M)^{[q]}$ and $(N^F_M)^{[q]}_M = N^{[q]}_M$, which combine to
make $z^q \in \m N^{[q]}_M$, and hence that $z \in N^{F\rm sp}_M$.
Similarly, if $z \in (N^{F\rm sp}_M)^F_M$, there
is some $q$ such that both $z^q \in (N^{F \rm sp}_M)^{[q]}_M$ and $(N^{F\rm sp}_M)^{[q]}_M \subseteq \m N^{[q]}_M$,
which combine to show that $z^q \in \m N^{[q]}_M$, and hence that $z \in N^{F\rm sp}_M$.
\end{proof}

The question immediately arises in which situations we have a special Frobenius closure
decomposition:

\begin{prop}
Let $(R,\m)$ be a Noetherian local ring of characteristic $p>0$ with perfect residue field.
Then for any finitely generated $R$-module $M$ and any submodule $N \subseteq M$, 
$N^F_M = N + \spF{N}_M$.
\end{prop}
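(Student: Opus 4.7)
The inclusion $N + \spF{N}_M \subseteq N^F_M$ is immediate: $N \subseteq N^F_M$ by the first axiom for a closure operation, and $\spF{N}_M \subseteq N^F_M$ by axiom (\ref{ax:contmL}) of Definition~\ref{def:csp}, so their sum is contained in $N^F_M$. All the work is in the reverse inclusion.

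For $N^F_M \subseteq N + \spF{N}_M$, I would take $z \in N^F_M$ and fix $q = p^e$ with $z^q \in N^{[q]}_M$. Pick generators $n_1, \dotsc, n_s$ of $N$ and write $z^q = \sum_{i=1}^s a_i n_i^q$ for some $a_i \in R$. The one nontrivial input is the hypothesis on the residue field: since $k$ is perfect, the Frobenius on $k$ is surjective, so each $\bar a_i \in k$ is a $q$-th power. Lifting these $q$-th roots to elements $b_i \in R$ gives $a_i - b_i^q \in \m$, so we may write $a_i = b_i^q + m_i$ with $m_i \in \m$.

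Now set $y := \sum_i b_i n_i \in N$. Using that the Frobenius map $M \ra F^e(M)$, $m \mapsto m^q$, is additive and satisfies $(rm)^q = r^q m^q$, I get
\[
y^q = \sum_i b_i^q n_i^q, \qquad (z - y)^q = z^q - y^q = \sum_i m_i n_i^q \in \m N^{[q]}_M.
\]
By definition of $\spF{\cdot}_M$, this says $z - y \in \spF{N}_M$, and hence $z = y + (z-y) \in N + \spF{N}_M$.

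There is no real obstacle beyond recognizing the perfect-residue-field trick of extracting $q$-th roots modulo $\m$; once that is in hand, the argument is a one-line manipulation using that Frobenius on $F^e(M)$ is additive. Note that the hypothesis is used in an essential way, since without perfectness of $k$ one cannot, in general, produce $y \in N$ such that $z^q - y^q \in \m N^{[q]}_M$.
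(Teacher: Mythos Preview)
Your proof is correct and follows essentially the same approach as the paper's: extract $q$-th roots of the coefficients modulo $\m$ using perfectness of $k$, subtract the resulting element of $N$, and observe that the difference lands in $\spF{N}_M$. The only cosmetic difference is that the paper separates the coefficients $a_i$ into units and non-units before extracting roots (taking $u_i$ only for the units), whereas you treat all coefficients uniformly; your version is arguably cleaner.
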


\begin{proof}
The containment `$\supseteq$' is obvious.  So suppose that $z \in N^F_M$.
Then there is some $q$ such that $z^q \in N^{[q]}_M$.

Let $\{z_1, \dotsc, z_n \}$ be any generating set of $N$.  Then we have \[
z^q = \sum_{i=1}^n a_i z_i^q,
\]
where $a_i \in R$.  Let $r$ be the number of $a_i$'s that are not in $\m$. 
We can rearrange the $z_i$'s in such a way that
$a_i \notin \m$ if $1 \leq i \leq r$ and $a_i \in \m$ if $r < i \leq n$.
Since $a_i \notin \m$ for $1 \leq i \leq r$ and $R/\m$ is perfect,
there exist $u_i \in R \setminus \m$ and $m_i \in \m$ such that $a_i = u_i^q + m_i$
whenever $i \leq r$.  Hence, \[
\left( z - \sum_{i=1}^r u_i z_i \right)^q = \sum_{i=1}^r m_i z_i^q + \sum_{i=r+1}^n a_i z_i^q 
\in \m N^{[q]}_M.
\]
That is, $z - \sum_{i=1}^r u_i z_i \in \spF{N}_M$, so that $z \in N + \spF{N}_M$.
\end{proof}

By analyzing the proof of \cite[Theorem 5.1]{nme*spread}, if $(R,\m)$ is a local ring, $c$ is any closure operation on (submodules of) a class of modules $\mathcal{M}$ with a special part $c\rm{sp}$, and if for all $R$-modules $M \in \mathcal{M}$ and submodules $L \subseteq M$ one has $L^c = L + L^{c\rm{sp}}$, then submodules of $M \in \mathcal{M}$ have spread, in the sense that every minimal $c$-reduction has the same minimal number of generators as every other.  Hence, if $k$ is a perfect field, then $F$-spread is well-defined for submodules of any finitely generated $R$-module.  The assumption on the field can be dropped too.

However, we present below a different way to prove that $F$-spread is well-defined, using notions analogous to the original definitions of analytic spread and 
analytic independence from Northcott and Rees \cite{NR} for Frobenius closure, inspired also
in part by Adela Vraciu's work on $*$-independence in \cite{Vr*ind}:

\begin{defn}
Fix a finitely-generated $R$-module $M$, as before.  Let $z_1, \dotsc, z_n \in N$, 
where $N$ is a submodule of $M$.  Then we say
that $z_1, \dotsc, z_n$ are \emph{analytically $F$-independent in $N$} [resp. 
\emph{analytically $F$-independent}] if for any power $q$ of $p$ and any 
polynomial $\phi$ of the form \[
\phi(X_1, \dotsc, X_n) = c_1 X_1^q + \cdots + c_n X_n^q,
\]
where $q$ is a power of $p$, the $X_i$ are indeterminates, and the $c_i$ are elements of $R$,
such that $\phi(z_1, \dotsc, z_n) \in \m N^{[q]}_M$ [resp. such that
$\phi(z_1, \dotsc, z_n) = 0 \in N^{[q]}_M$], it follows that the coefficients
$c_1, \dotsc, c_n$ of $\phi$ are all in $\m$.
\end{defn}

\begin{lemma}
For elements $z_1, \dotsc, z_n \in M$, with $L = (z_1, \dotsc, z_n)$, the 
following are equivalent: 
\begin{enumerate}
\item $z_1, \dotsc, z_n$ are analytically $F$-independent.
\item $z_1, \dotsc, z_n$ are analytically $F$-independent in $L$.
\item $z_1, \dotsc, z_n$ are analytically $F$-independent in any submodule $N$ of $M$ such that $L$
is an $F$-reduction of $N$.
\item $z_1, \dotsc, z_n$ are $F$-independent.
\item For any power $q$ of $p$, $z_1^q, \dotsc, z_n^q$ form a minimal set of
generators for $L^{[q]}_M$.
\end{enumerate}
\end{lemma}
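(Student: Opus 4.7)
The plan is to take condition (2) — analytic $F$-independence in $L$ — as the central hub, and show it is equivalent to each of the other four. The underlying observation is that (2) is just the statement that $z_1^q, \dotsc, z_n^q$ are linearly independent over $k$ in $L^{[q]}_M / \m L^{[q]}_M$ for every $q$; so (2) $\Leftrightarrow$ (5) is immediate from Nakayama's lemma applied to the finitely generated module $L^{[q]}_M = R z_1^q + \dotsb + R z_n^q$.

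Next, for (1) $\Leftrightarrow$ (2): the implication (2) $\Rightarrow$ (1) is trivial since $0 \in \m L^{[q]}_M$. For (1) $\Rightarrow$ (2), note that every element of $\m L^{[q]}_M$ can be written as $\sum_i m_i z_i^q$ with $m_i \in \m$; a relation $\sum c_i z_i^q \in \m L^{[q]}_M$ thus becomes $\sum(c_i - m_i) z_i^q = 0$, and (1) forces $c_i - m_i \in \m$, hence $c_i \in \m$. For (4) $\Leftrightarrow$ (5): unwinding the definition of Frobenius closure, $z_i \in (\sum_{j \ne i} R z_j)^F_M$ means $z_i^q \in \sum_{j \ne i} R z_j^q$ for some $q$. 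Given (5), such a containment would render $z_1^q, \dotsc, z_n^q$ non-minimal generators of $L^{[q]}_M$; conversely, failure of (5) produces a relation $(1 - m_i) z_i^q = \sum_{j \ne i}(r_j + m_j) z_j^q$ with $m_i \in \m$, and inverting the unit $1 - m_i$ puts $z_i^q$ into $\sum_{j \ne i} R z_j^q$, contradicting (4).

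It remains to handle (2) $\Leftrightarrow$ (3). The direction (3) $\Rightarrow$ (2) is immediate by taking $N = L$. The converse is the step I expect to be the main obstacle, because the $F$-reduction hypothesis is an asymptotic statement that must be converted into the precise Frobenius bookkeeping demanded by (2). Since $N$ is finitely generated and $N \subseteq L^F_M$, there is some $q_0 = p^{e_0}$ with $N^{[q_0]}_M \subseteq L^{[q_0]}_M$. Given $\sum_i c_i z_i^q \in \m N^{[q]}_M$, raising to the $q_0$-th Frobenius power yields
\[
\sum_i c_i^{q_0} z_i^{q q_0} \;\in\; \m^{[q_0]} N^{[q q_0]}_M \;\subseteq\; \m L^{[q q_0]}_M,
\]
where the final inclusion uses that iterating Frobenius preserves $N^{[q_0]}_M \subseteq L^{[q_0]}_M$ to give $N^{[q q_0]}_M \subseteq L^{[q q_0]}_M$. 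Then (2) applied at level $qq_0$ yields $c_i^{q_0} \in \m$, and primality of the maximal ideal forces $c_i \in \m$, completing the argument.
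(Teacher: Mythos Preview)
Your proof is correct and uses essentially the same ideas as the paper's, just organized differently: you route everything through (2) as a hub, whereas the paper proves the cycle $(1)\Rightarrow(2)\Rightarrow(3)\Rightarrow(1)$ together with $(1)\Leftrightarrow(4)$ and $(2)\Leftrightarrow(5)$. One small improvement in your version is that your $(2)\Rightarrow(3)$ argument---always raising to the $q_0$-th Frobenius power---avoids the paper's case split on whether $q\geq q'$ or $q<q'$; otherwise the individual implications are argued identically.
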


\begin{proof}
$(1) \Rightarrow (2):$ Let $\phi = c_1 X_1^q + \cdots + c_n X_n^q$ such that
$\phi(z_1, \dotsc, z_n) \in \m (z_1, \dotsc, z_n)^{[q]}_M$.  Then there exist
$m_1, \dotsc, m_n \in \m$ such that \[
c_1 z_1^q + \cdots + c_n z_n^q = m_1 z_1^q + \cdots + m_n z_n^q.
\]
Let $\psi = (c_1 - m_1) X_1^q + \cdots +  (c_n - m_n) X_n^q$.  Then
$\psi(z_1, \dotsc, z_n) = 0$, so that by analytic $F$-independence, $c_i - m_i \in \m$ 
for all $i$.  Thus, since $m_i \in \m$ for all $i$, it follows that $c_i \in \m$
for all $i$.

$(2) \Rightarrow (3):$ Let $q'$ be a power of $p$ such that
$N^{[q']}_M = (z_1, \dotsc, z_n)^{[q']}_M$, and let $\phi = c_1 X_1^q + \cdots + c_n X_n^q$
be such that $\phi(z_1, \dotsc, z_n) \in \m N^{[q]}_M$.  If $q \geq q'$, then 
$N^{[q]}_M = (z_1, \dotsc, z_n)^{[q]}_M$, so that by analytic $F$-independence in $(z_1, \dotsc, z_n)$,
all the $c_i$ are in $\m$.  On the other hand, if $q < q'$, then we have \begin{align*}
\phi(z_1, \dotsc, z_n)^{q'/q}_M &\in \m^{[q'/q]} N^{[q']}_M \\
&= \m^{[q'/q]} (z_1, \dotsc, z_n)^{[q']}_M \\
&\subseteq \m (z_1, \dotsc, z_n)^{[q']}_M.
\end{align*}
Thus by analytic $F$-independence in $(z_1, \dotsc, z_n)$, we have that $c_i^{q'/q} \in \m$ for
each $i$.  But $\m$ is radical, so $c_i \in \m$ for all $i$.

$(3) \Rightarrow (1):$ Obvious, since $0 \in \m N^{[q]}_M$ for all $N$ and all $q$.

$(1) \Rightarrow (4):$ If $z_1, \dotsc, z_n$ are not $F$-independent, then without 
loss of generality $z_1 \in (z_2, \dotsc, z_n)^F_M$.
Then there is some $q$ with $z_1^q \in (z_2, \dotsc, z_n)^{[q]}_M$.  Thus, there are
choices $c_2, \dotsc, c_n \in R$ such that \[
z_1^q + c_2 z_2^q + \cdots + c_n z_n^q.
\]
If we set $\phi = X_1^q + c_2 X_2^q + \cdots + c_n X_n^q$, then $\phi(z_1, \dotsc, z_n) = 0$
but not all of the coefficients of $\phi$ are in $\m$ (since the coefficient for $X_1^q$ is 1),
which shows that $z_1, \dotsc, z_n$ are not analytically $F$-independent.

$(4) \Rightarrow (1):$ The proof that (1) implies (4) can pretty much be reversed:
If $z_1, \dotsc, z_n$ are not analytically $F$-independent, then there is some polynomial
$\phi = c_1 X_1^q + \cdots + c_n X_n^q$ such that $\phi(z_1, \dotsc, z_n) = 0$ and
at least one of the $c_i$ is a unit.  We may assume that $i=1$.  Then \[
z_1^q = -c_1^{-1} (c_2 z_2^q + \cdots + c_n z_n^q),
\]
so that $z_1 \in (z_2, \dotsc, z_n)^F_M$, and $z_1, \dotsc, z_n$ are not $F$-independent.

$(2) \Leftrightarrow (5):$ The elements $z_1, \dotsc, z_n$ are $F$-independent in $L$ 
if and only if for any power $q$ of $p$, whenever
$c_1 z_1^q + \dotsc + c_n z_n^q \in \m (z_1^q, \dotsc, z_n^q) = \m L^{[q]}_M$, it follows that 
every $c_i \in \m$.  This is in turn equivalent to the statement that for any power $q$ of $p$,
$z_1^q, \dotsc, z_n^q$ is a minimal generating set of $L^{[q]}_M$.
\end{proof}

\begin{lemma}
Let $z_1, \dotsc, z_n \in M$ be $F$-independent.  Then the module $L$ that
they generate is strongly $F$-independent.
\end{lemma}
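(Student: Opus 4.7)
The plan is to exploit condition (5) of the previous lemma, which characterizes $F$-independence of an ordered generating set $y_1, \dotsc, y_m$ of a submodule by the requirement that $y_1^q, \dotsc, y_m^q$ form a minimal generating set of $L^{[q]}_M$ for every power $q$ of $p$. I will first apply this characterization to the given $F$-independent sequence $z_1, \dotsc, z_n$. Taking $q = 1$ shows that $z_1, \dotsc, z_n$ is itself a minimal generating set of $L$, so by Nakayama's lemma any minimal generating set of $L$ has exactly $n$ elements; for general $q$, the minimal number of generators of $L^{[q]}_M$ equals $n$ as well.

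Next I would take an arbitrary minimal generating set $y_1, \dotsc, y_n$ of $L$. For each $q$, writing $y_i = \sum_j a_{ij} z_j$ and using the additivity of the assignment $z \mapsto z^q$ in $F^e(M)$ together with the identity $(a z_j)^q = a^q z_j^q$, one sees that the submodule $L^{[q]}_M$ is generated by $y_1^q, \dotsc, y_n^q$ (equivalently, this is just the right exactness of the Frobenius functor applied to the surjection $R^n \to L$ sending the $i$-th basis vector to $y_i$). Since this is a generating set of size $n$ for a module whose minimal number of generators is $n$, it must be a minimal generating set of $L^{[q]}_M$.

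Because this holds for every $q$, the chain of implications $(5) \Rightarrow (2) \Rightarrow (4)$ of the previous lemma, applied to $y_1, \dotsc, y_n$, yields that they are $F$-independent, whence $L$ is strongly $F$-independent. There is essentially no obstacle here; the only thing worth checking is that the Frobenius functor carries generating sets to generating sets of the same cardinality, and the lemma just proved does all the real work by letting us read off $F$-independence from the invariant $\mu(L^{[q]}_M)$.
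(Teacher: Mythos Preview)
Your argument is correct and is actually cleaner than the paper's own proof. You exploit condition (5) of the preceding lemma to turn $F$-independence into the numerical invariant $\mu(L^{[q]}_M)=n$, observe that this number does not depend on which minimal generating set of $L$ one starts from, and then feed any other minimal generating set $y_1,\dotsc,y_n$ back through $(5)\Rightarrow(4)$. The only point one might want to make explicit is that $y_1^q,\dotsc,y_n^q$ really generate $L^{[q]}_M$; this follows either from right exactness of the Frobenius functor or from the fact that the change-of-basis matrix $A=(a_{ij})$ is invertible (its determinant is a unit modulo $\m$), and hence so is the matrix $(a_{ij}^q)$, since $\det(a_{ij}^q)=(\det A)^q$ in characteristic $p$.

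The paper, by contrast, argues directly with the definition of $F$-independence: it writes the transition from $\{z_i\}$ to $\{y_i\}$ as an invertible matrix, reduces (following Vraciu) to a single elementary shear $y_1=z_1+d z_2$, $y_i=z_i$ for $i\ge 2$, and then checks by hand for each index $i$ that $y_i\notin (y_1,\dotsc,\widehat{y_i},\dotsc,y_n)^F_M$. This is more computational and implicitly uses that $GL_n$ over a local ring is generated by elementary and diagonal matrices. Your approach avoids that reduction entirely by passing through the invariant $\mu(L^{[q]}_M)$, which is arguably the more conceptual route; the paper's approach, on the other hand, makes no appeal to condition (5) and shows explicitly where the unit/non-unit dichotomy is used.
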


\begin{proof}
Let $y_1, \dotsc, y_n$ be another minimal set of generators.  Then the vector 
$\left(\begin{matrix} y_1 \\ \vdots \\ y_n \end{matrix}\right)$ may be obtained
by multiplying the vector $\left(\begin{matrix} z_1 \\ \vdots \\ z_n \end{matrix}
\right)$ by an invertible $n \times n$ matrix of elements of $R$.  Arguing as in
Vraciu \cite{Vr*ind}, we may reduce to the case where $y_1 = z_1 + d z_2$ and $y_i = z_i$
for all $i \geq 2$.  Here $d$ is some element of $R$.

Now, it is clear $y_i \notin (y_1, \dotsc, y_{i-1}, y_{i+1}, \dotsc, y_n)^F_M$ as long 
as $i \geq 3$, for in those cases $y_i = z_i$ and the module for which we claim its 
non-membership is $(z_1, \dotsc, z_{i-1}, z_{i+1}, \dotsc, z_n)^F_M$.

Next, suppose that $y_1 \in (y_2, \dotsc, y_n)^F_M$.  Then for some $c \in R$, \[
z_1^q + (d^q + c) z_2^q = (z_1 + d z_2)^q + c z_2^q \in (z_3, \dotsc, z_n)^{[q]}_M.
\]
Hence, $z_1^q \in (z_2, \dotsc, z_n)^{[q]}_M$, contradicting the fact that 
the $z_i$ are $F$-independent.

Finally suppose that $y_2 \in (y_1, y_3, \dotsc, y_n)^F_M$.  Then for some $q$ and some $c \in R$, \[
c z_1^q + (1 + c d^q) z_2^q \in (z_3, \dotsc, z_n)^{[q]}_M.
\]
If $c$ is a unit, this implies that $z_1^q \in (z_2, \dotsc, z_n)^{[q]}_M$, which 
is a contradiction.  If $c$ is not a unit, then $(1 + c d^q)$ is a unit, which 
implies that $z_2^q \in (z_1, z_3, \dotsc, z_n)^{[q]}_M$, also a contradiction.

Hence, $y_1, \dotsc, y_n$ are $F$-independent elements, as was to be shown.
\end{proof}

\begin{prop}\label{prop:F-spread}
Let $N$ be any submodule of $M$.  Then for any minimal $F$-reduction $L$ of $N$, the minimal number
of generators of $L$ is equal to the eventual minimal number of generators of the modules
$N^{[q]}_M$ for large enough choices of the power $q$ of $p$.  Hence, Frobenius closure has spread.
\end{prop}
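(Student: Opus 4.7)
The plan is to combine the previous lemma with the fact (from Lemma~\ref{lem:capmL} and the general Nakayama theory recalled in the introduction) that a minimal $F$-reduction of $N$ is precisely a strongly $F$-independent $F$-reduction of $N$. First, let $L$ be any minimal $F$-reduction of $N$, and fix a minimal generating set $z_1, \dotsc, z_n$ of $L$, so that $n$ is the minimal number of generators of $L$. Strong $F$-independence forces the $z_i$ to be $F$-independent, so by the equivalence $(4) \Leftrightarrow (5)$ in the previous lemma, $z_1^q, \dotsc, z_n^q$ is a minimal generating set of $L^{[q]}_M$ for \emph{every} power $q$ of $p$; in particular the minimal number of generators of $L^{[q]}_M$ equals $n$ for every such $q$.

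Next, I would pass from $L^{[q]}_M$ to $N^{[q]}_M$. Fix generators $y_1, \dotsc, y_m$ of the finitely generated module $N$. Since $L$ is an $F$-reduction of $N$, each $y_j$ lies in $N^F_M = L^F_M$, so there is a power $q_j$ of $p$ with $y_j^{q_j} \in L^{[q_j]}_M$; applying further Frobenius powers to this relation upgrades it to $y_j^q \in L^{[q]}_M$ for every $q \geq q_j$. Setting $q_0 = \max_j q_j$, we conclude $N^{[q]}_M \subseteq L^{[q]}_M$ whenever $q \geq q_0$, while the reverse inclusion is automatic from $L \subseteq N$. Hence $N^{[q]}_M = L^{[q]}_M$ for every $q \geq q_0$, and the minimal number of generators of $N^{[q]}_M$ equals $n$ for every such $q$.

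The eventual minimal number of generators of $N^{[q]}_M$ is visibly an invariant of $N$ alone, independent of the choice of $L$. Since we have shown it equals $n$, the minimal number of generators of $L$, for every minimal $F$-reduction $L$, all such $L$ share the same minimal number of generators; by definition, this says Frobenius closure has spread. I do not anticipate a serious obstacle: the two substantive ingredients, namely the characterization of minimal $F$-reductions via strong $F$-independence and the bracket-power characterization of $F$-independence, have been set up precisely so that this stabilization argument slides through. The only micro-step worth watching is the promotion $y_j^{q_j} \in L^{[q_j]}_M \Rightarrow y_j^q \in L^{[q]}_M$ for $q \geq q_j$, which is the standard iterated-Frobenius computation.
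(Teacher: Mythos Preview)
Your proposal is correct and follows essentially the same route as the paper: use that a minimal $F$-reduction has an $F$-independent minimal generating set, invoke condition~(5) of the earlier lemma to see that bracket powers of this set minimally generate $L^{[q]}_M$, and then note $L^{[q]}_M = N^{[q]}_M$ for $q \gg 0$. The paper compresses your second paragraph into a single clause (``for sufficiently large $q$, $L^{[q]}_M = N^{[q]}_M$''), but the content is identical.
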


\begin{proof}
Let $z_1, \dotsc, z_t$ be a minimal set of generators for $L$.  Since $z_1, \dotsc, z_t$
are $F$-independent, then for any power $q$ of $p$, $z_1^q, \dotsc, z_t^q$
form a minimal set of generators for $L^{[q]}_M$.  On the other hand, for sufficiently large
$q$, $L^{[q]}_M = N^{[q]}_M$.  Hence the minimal number of generators of such an $N^{[q]}_M$ is
always equal to the minimal number of generators of $L$.
\end{proof}

%
%

 \section{Special part of integral closure}
 \emph{Note:} The paper \cite{EH-cc} generalizes some results of this section (e.g. Proposition~\ref{prop:Reesvalsp}) though the point of view is very different from the one adopted here, in several respects.

For background on integral closure of ideals, the author recommends the recent book \cite{HuSw-book} of Huneke and Swanson, and in particular Chapter 10 on Rees valuations.
 
\begin{defn}
For an ideal $I$ in a Noetherian local ring $(R,\m)$, define the \emph{special part 
of the integral closure of }$I$ to be the set \[
\spintcl{I} := \{x \in R \mid \exists n \in \N \text{ such that } x^n \in \overline{\m I^n}\}.
\]
\end{defn}

\begin{prop} Let $(R,\m)$ be a Noetherian local ring, and $J \subseteq I$ ideals in $R$.
Then $\spintcl{I}$ is an ideal, $\spintcl{J} \subseteq \spintcl{I}$, and
if $R$ has prime characteristic $p>0$, then $\sptc{I} \subseteq \spintcl{I}$.
\end{prop}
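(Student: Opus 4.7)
The plan is to verify the three claims in order of increasing difficulty, with the main work being the addition closure for (1).

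For scalar closure in (1), if $x \in \spintcl{I}$ with $x^n \in \overline{\m I^n}$ and $r \in R$, then $(rx)^n = r^n x^n \in \overline{\m I^n}$ since $\overline{\m I^n}$ is an ideal, so $rx \in \spintcl{I}$. The addition closure is where the real work lies. Suppose $x^n \in \overline{\m I^n}$ and $y^m \in \overline{\m I^m}$. I would show $(x+y)^{n+m} \in \overline{\m I^{n+m}}$ via binomial expansion, splitting
\[
(x+y)^{n+m} = \sum_{i \geq n} \binom{n+m}{i} x^i y^{n+m-i} + \sum_{i < n} \binom{n+m}{i} x^i y^{n+m-i}.
\]
For a term with $i \geq n$, I would first note $x \in \overline{I}$ (since $x^n \in \overline{\m I^n} \subseteq \overline{I^n}$, and $z^k \in \overline{I^k}$ implies $z \in \overline{I}$ is standard), so $x^{i-n} \in \overline{I^{i-n}}$ and $y^{n+m-i} \in \overline{I^{n+m-i}}$. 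Using the product property $\overline{J}\cdot \overline{K} \subseteq \overline{JK}$ for ideals, write
\[
x^i y^{n+m-i} = x^n \cdot x^{i-n} \cdot y^{n+m-i} \in \overline{\m I^n} \cdot \overline{I^{i-n}} \cdot \overline{I^{n+m-i}} \subseteq \overline{\m I^{n+m}}.
\]
The symmetric argument handles $i < n$ using $y^m \in \overline{\m I^m}$. Since $\overline{\m I^{n+m}}$ is an ideal, summing gives $(x+y)^{n+m} \in \overline{\m I^{n+m}}$, so $x+y \in \spintcl{I}$.

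Claim (2) is immediate from monotonicity: $J \subseteq I$ yields $\m J^n \subseteq \m I^n$ and hence $\overline{\m J^n} \subseteq \overline{\m I^n}$ for every $n$, so any $x \in \spintcl{J}$ lies in $\spintcl{I}$.

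For (3) in characteristic $p>0$, suppose $x \in \sptc{I}$, so there is some $q = p^e$ with $x^q \in (\m I^{[q]})^*$. I would invoke the standard containment $J^* \subseteq \overline{J}$ of tight closure in integral closure, together with $I^{[q]} \subseteq I^q$, to conclude
\[
x^q \in (\m I^{[q]})^* \subseteq \overline{\m I^{[q]}} \subseteq \overline{\m I^q},
\]
which exhibits $x \in \spintcl{I}$ with $n = q$.

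The main obstacle is the addition argument, which requires the product property $\overline{J}\cdot\overline{K}\subseteq \overline{JK}$ and the routine fact that $z^k \in \overline{I^k}$ forces $z \in \overline{I}$; both are standard (see, e.g., \cite{HuSw-book}). Once those are in hand, the binomial-partition trick makes everything go through cleanly.
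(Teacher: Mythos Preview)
Your argument is correct. Parts (2) and (3) match the paper's proof essentially verbatim. For the addition step in (1), however, you take a genuinely different route.

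The paper first aligns the exponents (from $x^r\in\overline{\m I^r}$ and $y^s\in\overline{\m I^s}$ it passes to $n=rs$ so that $x^n,y^n\in\overline{\m I^n}$), then reduces modulo a minimal prime to assume $R$ is a domain, and uses the multiplier characterisation of integral closure: choosing $c\neq 0$ with $c x^{nt},c y^{nt}\in(\m I^n)^t$ for all $t$, together with the monomial inclusion $(x+y)^{n(t+1)}\in(x+y)^n(x^n,y^n)^t$, it shows $d\bigl((x+y)^n\bigr)^t\in(\m I^n)^t$ for $d=c^2(x+y)^n$ and concludes $(x+y)^n\in\overline{\m I^n}$. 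Your approach instead expands $(x+y)^{n+m}$ binomially and handles each term using $x,y\in\overline{I}$ and the product rule $\overline{J}\cdot\overline{K}\subseteq\overline{JK}$, avoiding both the domain reduction and the multiplier trick. This is cleaner and more elementary for the proposition as stated. The paper's argument, on the other hand, buys a slightly sharper intermediate statement: if $x^n,y^n\in\overline{\m I^n}$ for the \emph{same} $n$, then $(x+y)^n\in\overline{\m I^n}$ with that same exponent. The paper explicitly reuses this uniform-exponent fact later when verifying the Nakayama-type axiom for $-\mathrm{sp}$, whereas your argument only yields $(x+y)^{n+m}\in\overline{\m I^{n+m}}$.
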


\begin{proof}
For the first statement, let $x,y \in \spintcl{I}$ and $a \in R$.  
It is obvious from the definition that $a x \in \spintcl{I}$.
So we only need to show that $x + y \in \spintcl{I}$.

There exist positive integers $r$ and $s$ such that $x^r \in \intcl{(\m I^r)}$ and $y^s \in \intcl{(\m I^s)}$.
Let $n = r s$.  Then \[
x^n = (x^r)^s \in \left(\intcl{(\m I^r)}\right)^s \subseteq \intcl{(\m^s I^{r s})} \subseteq \intcl{(\m I^n)},
\]
and by symmetry we also have $y^n \in \intcl{(\m I^n)}$.  So it suffices to show that
if $x^n, y^n \in \intcl{(\m I^n)}$, then $(x+y)^n \in \intcl{(\m I^n)}$.  Since integral
closure may be computed modulo minimal primes, we may assume from this point on that
$R$ is an integral domain.

Now, by one of the equivalent
definitions for integral closure in integral domains, there is some $c \neq 0$ such that for all positive integers $t$,
\begin{equation}\label{eq:cnt}
c x^{n t}, c y^{n t} \in (\m I^n)^t.
\end{equation}
Note also the general fact that arises from looking at monomials that for any nonnegative integers $n$ and $t$:
\begin{equation}\label{eq:mon}
(x+y)^{n(t+1)} \in (x+y)^n \left(x^n,y^n\right)^t.
\end{equation}
Let $d = c^2 (x+y)^n$.  Clearly $d \neq 0$, and we have \begin{align*}
d \left((x+y)^n\right)^t &= c^2 (x+y)^{n(t+1)} & \\
&\in c^2 \left(x^n, y^n \right)^t, &\text{by (\ref{eq:mon})} \\
&= \sum_{j=0}^t (c x^{n j}) (c y^{n(t-j)}) & \\
&\subseteq \sum_{j=0}^t (\m I^n)^j (\m I^n)^{t-j} &\text{by (\ref{eq:cnt})} \\
&= (\m I^n)^t.&
\end{align*}
Hence, $(x+y)^n \in \intcl{(\m I^n)}$, as was to be shown.

It is obvious that $\spintcl{J} \subseteq \spintcl{I}$.

The third statement follows from the fact that  \[
(\m I^{[q]})^* \subseteq (\m I^q)^* \subseteq \overline{\m I^q}.
\]
for all powers $q = p^e$ of $p$.

\end{proof}

We need the following symbols, following Samuel (op. cit.): \begin{itemize}
\item $v(I) := \inf \{v(x) \mid x \in I \}$ and
\item $v_I(x) := \sup \{n \in \N \cup \infty \mid x \in I^n \}$
\end{itemize}

First note that for any commutative ring $R$, any $(\mathbb{R}_{\geq 0} \cup \infty)$-valued valuation
$v$ defined on $R$, and any proper ideal $J$ of $R$, we have $v(\bar{J}) = v(J)$.

\begin{proof}
Since $J \subseteq \bar{J}$, $v(\bar{J}) \leq v(J)$.  On the other hand, let $x \in \bar{J}$.
Then there is some $k$ such that $x^{n+k} \in J^n (J,x)^k \subseteq J^n$ for all $n \in \N$.  Hence, \[
(n+k) v(x) = v(x^{n+k}) \geq v(J^n) = n v(J),
\]
so that $v(x) \geq \frac{n}{n+k} \cdot v(J)$ for all $n \in \N$.  It follows that
$v(x) \geq v(J)$, whence $v(\bar{J}) \geq v(J)$.
\end{proof}

\begin{prop}\label{prop:Reesvalsp}
Let $I$ be an ideal of $R$ and let $v_1, \dotsc, v_t$ be the Rees valuations
of $I$, with centers $\p_1, \dotsc, \p_t$ respectively.  Let $\q = \p_1 \cap \cdots 
\cap \p_t$ be their intersection.  Then the following are equivalent for
any $x \in R$: \begin{enumerate}
\item There is some $n_0 \in \N$ such that $x^n \in I^{n+1}$ for all $n \geq n_0$.
\item There is some $n \in \N$ such that $x^n \in I^{n+1}$.
\item There is some $r \in \N$ such that $x^r \in \intcl{(I^{r+1})}$.
\item There is some $n \in \N$ such that $x^n \in \q I^n$.
\item There is some $n \in \N$ such that $x^n \in \intcl{(\q I^n)}$.
\item $v_i(x) > v_i(I)$ for $1 \leq i \leq t$.
\item $x \in \spintcl{(I R_{\p_i})}$ for $1 \leq i \leq t$.
\end{enumerate}
In particular, if $I$ is $\m$-primary, then $x \in \spintcl{I}$ iff 
$x^n \in I^{n+1}$ for some $n$ iff $v(x) > v(I)$ for all Rees valuations
$v$ of $I$.
\end{prop}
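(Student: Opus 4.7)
My plan is to arrange the seven conditions as a single cycle, using the auxiliary lemma $v_i(\overline{J})=v_i(J)$ to drive the implications from ideal membership to valuation conditions and using the integrality of Rees valuations to close the cycle back. Condition $(7)$ can be handled separately by localization.

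I would dispatch the trivial containments first: $(1)\Rightarrow(2)\Rightarrow(3)$ follows from $I^{n+1}\subseteq\overline{I^{n+1}}$, while $(2)\Rightarrow(4)$ follows from the fact that every center $\mathfrak{p}_i\supseteq I$ forces $I\subseteq\mathfrak{q}$ and hence $I^{n+1}\subseteq\mathfrak{q}I^n$, and $(4)\Rightarrow(5)$ is immediate. For $(2)\Rightarrow(1)$ I would iterate: writing $n=kn_0+r$ with $0\le r<n_0$, we have $x^{kn_0}\in I^{k(n_0+1)}$, and for $k\geq r+1$ (so for $n$ large) $k(n_0+1)\geq n+1$, giving $x^n\in I^{n+1}$.

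Next I would process the forward valuation implications $(3)\Rightarrow(6)$ and $(5)\Rightarrow(6)$. Applying any $v_i$ to $x^r\in\overline{I^{r+1}}$ and invoking the lemma yields $r\,v_i(x)\geq(r+1)v_i(I)$, so $v_i(x)>v_i(I)$. For $(5)$, the lemma gives $n\,v_i(x)\geq v_i(\mathfrak{q})+n v_i(I)$; since $\mathfrak{q}\subseteq\mathfrak{p}_i$ is contained in the center of $v_i$ we have $v_i(\mathfrak{q})\geq 1$, so again $v_i(x)>v_i(I)$.

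The nontrivial step is $(6)\Rightarrow(2)$, which closes the cycle. Since Rees valuations are $\mathbb{Z}$-valued and only finitely many, the hypothesis sharpens to $v_i(x)\geq v_i(I)+1$ uniformly, so $v_i(x^n)\geq n v_i(I)+n$. Hence the valuative criterion yields $x^n\in\overline{I^{n+1}}$ as soon as $n\geq\max_iv_i(I)$, and by pushing $n$ even larger one obtains $x^n\in\overline{I^{n+1+c}}$ for any prescribed $c$. Converting this integral-closure membership into actual membership in $I^{n+1}$ is the delicate point: I would absorb the surplus using a Briançon--Skoda-type containment $\overline{I^m}\subseteq I^{m-c}$ for suitable fixed $c$, which is where structural hypotheses on $R$ (e.g.\ analytic unramifiedness, or the Noetherian setting together with the finiteness of the normalization of the Rees algebra) enter the argument. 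This is the main obstacle I expect, since the Rees valuations natively characterize only integral closures of powers, not the powers themselves.

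Finally, $(7)$ I would obtain by localizing at each $\mathfrak{p}_i$: the Rees valuations of $IR_{\mathfrak{p}_i}$ are precisely those $v_j$ with center inside $\mathfrak{p}_i$, so in particular $v_i$ appears, and applying the already established equivalence $(6)\Leftrightarrow(5)$ inside the local ring $R_{\mathfrak{p}_i}$ shows that $v_i(x)>v_i(I)$ is equivalent to $x\in\spintcl{IR_{\mathfrak{p}_i}}$; doing this for every $i$ gives $(6)\Leftrightarrow(7)$. The $\mathfrak{m}$-primary addendum then follows because in that case $\mathfrak{q}=\mathfrak{m}$ and all Rees valuations have center $\mathfrak{m}$, so conditions $(4),(5)$ collapse to the defining condition of $\spintcl{I}$.
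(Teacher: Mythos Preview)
Your cycle is fine up through $(5)\Rightarrow(6)$, and your handling of $(7)$ by localization is reasonable. The genuine gap is in $(6)\Rightarrow(2)$, and you have correctly located it: Rees valuations only see integral closures of powers. But your proposed remedy is the wrong tool. A Brian\c{c}on--Skoda containment $\overline{I^m}\subseteq I^{m-c}$ is not available in an arbitrary Noetherian local ring; you would be importing hypotheses (regularity, $F$-regularity, analytic unramifiedness of a strong form) that the proposition does not assume.

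The paper closes the cycle without any such hypothesis, in two independent ways. For $(6)\Rightarrow(1)$ it invokes Rees's asymptotic theorem
\[
\lim_{n\to\infty}\frac{v_I(x^n)}{n}=\min_i\frac{v_i(x)}{v_i(I)},
\]
where $v_I$ is the $I$-adic order function; if the right side exceeds $1$, then $v_I(x^n)>n$ for large $n$, and integrality of $v_I$ forces $x^n\in I^{n+1}$. Separately, for $(3)\Rightarrow(2)$ it uses only the elementary reduction characterization of integral closure: $x^r\in\overline{I^{r+1}}$ means $I^{r+1}$ is a reduction of $(I^{r+1},x^r)$, so there is $n_0$ with $(x^r)^{n_0+k}\in (I^{r+1})^k$ for all $k$; choosing $k=n_0 r+1$ and $n=r(n_0+k)$ gives $x^n\in I^{n+1}$ exactly. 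Either route would rescue your argument: once you have $(6)\Rightarrow(3)$ (which you essentially prove), the reduction trick for $(3)\Rightarrow(2)$ finishes the job with no extra hypotheses on $R$.
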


\begin{proof}
It is obvious that $(1) \Rightarrow (2)$, $(2) \Rightarrow (3)$ (taking $n=r$) and $(4) \Rightarrow (5)$.  Also,
$(2) \Rightarrow (4)$ because $I \subseteq \p_i$ for $1 \leq i \leq t$, which
implies that $I \subseteq \q$.

\noindent $(3) \Rightarrow (2)$: There is some integer $n_0$ such that
for all positive integers $k$, \[
(x^r)^{n_0 + k} \in (I^{r+1})^k.
\]
In particular, letting $k = n_0 r + 1$ and $n = n_0 r^2 + n_0 r + r$, we have \[
x^n = x^{n_0 r^2 + n_0 r + r} = (x^r)^{n_0 + k} \in (I^{r+1})^k = I^{n_0 r^2 + n_0 r + r + 1} = I^{n+1}.
\]

\noindent $(5) \Rightarrow (6)$: Suppose $x^n \in \overline{\q I^n}$, and let
$v = v_i$ for some $1 \leq i \leq t$.  Then we have: \[
n v(x) = v(x^n) \geq v(\overline{\q I^n}) = v(\q I^n) 
\geq v(\p_i I^n) = v(\p_i) + n v(I) > n v(I).
\]
Hence, $v_i(x) > v_i(I)$ for $1 \leq i \leq t$.\footnote{We have proved
a bit more here, actually.  In particular, \[
v_i(x) - v_i(I) \geq \frac{v_i(\p_i)}{n}.
\]}

\noindent $(6) \Rightarrow (1)$: By the Rees valuation theorem \cite[Theorem 4.16]{Rees-book}, \[
\lim_{n \rightarrow \infty} \frac{v_I(x^n)}{n} = \min_{1 \leq i \leq t} 
\frac{v_i(x)}{v_i(I)} > 1.
\]
So there is some $n_0 \in \N$ such that $\frac{v_I(x_n)}{n} > 1$ for all
$n \geq n_0$.  Hence for such $n$, $v_I(x^n) > n$, whence since $v_I$ 
is integer-valued, $v_I(x^n) \geq n+1$, which means that
$x^n \in I^{n+1}$.

\noindent $(7) \Rightarrow (6)$ is 
clear from the definitions.  $(5) \Rightarrow (7)$
is because integral closure is persistent and $\q \subseteq \p_i$ for $1 \leq i \leq t$.

The last statement follows from the fact that if $I$ is $\m$-primary 
then each of its Rees valuations has center $\m$.
\end{proof}

At this point, the reader may protest that we haven't yet shown that 
$-\rm sp$ is a special part of the integral closure operation.  That situation
will soon be remedied, but first we note the following important lemma of Lipman's
from Huneke's paper:

\begin{lemma}\cite[Lemma 3.4]{Hu3D}\label{lem:Lipman}
Let $R$ be a Noetherian local integral domain, let $I$ be an ideal of $R$, let
$K$ be the quotient field of $R$, and let
$x \in R$.  Then if $x$ is in $I V$ for each discrete valuation ring $V$ between
$R$ and $K$ whose center on $R$ is $\m$, then $x \in \intcl{I}$.
\end{lemma}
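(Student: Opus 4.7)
The plan is to prove the contrapositive. Assume $x \notin \intcl{I}$; I will construct a DVR $V$ with $R \subseteq V \subseteq K$, whose center on $R$ is $\m$, such that $x \notin IV$. The corner cases $x=0$ (trivially in $\intcl{I}$) and $x \notin \m$ (in which case the hypothesis holds vacuously, since any DVR centered at $\m$ has $IV \subseteq \m_V$ but the unit $x$ does not) can be set aside, so assume $0 \neq x \in \m$. Write $I = (a_1, \ldots, a_n)$, let $S := R[I/x] = R[a_1/x, \ldots, a_n/x] \subseteq K$, and set $J := (a_1/x, \ldots, a_n/x) S$.

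The technical heart of the proof is the claim that $J + \m S$ is a proper ideal of $S$. Suppose for contradiction that $1 = j + m$ in $S$ for some $j \in J$ and $m \in \m S$. Express $j$ as a polynomial in the generators $a_i/x$ with $R$-coefficients and no constant term, and $m$ as such a polynomial with $\m$-coefficients; then multiply through by a large enough power $x^D$ to clear all denominators, and group the resulting elements of $R$ by the original total degree $k$ in the $a_i/x$'s, to obtain an identity in $R$ of the form
\[ (1 - m_0) x^D = \sum_{k=1}^{D} b_k \, x^{D-k}, \]
where $m_0 \in \m$ and $b_k \in I^k$ for each $k \geq 1$. Since $1 - m_0$ is a unit of the local ring $R$, dividing through yields a monic integral dependence relation of $x$ over $I$, contradicting $x \notin \intcl{I}$. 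This step is the principal obstacle: it is the one place where the hypothesis $x \notin \intcl{I}$ gets converted into ideal-theoretic data about $S$.

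Granted $J + \m S \subsetneq S$, pick a maximal ideal $\n$ of $S$ containing $J + \m S$; since $\m S \subseteq \n$, we have $\n \cap R = \m$. The localization $S_\n$ is a Noetherian local domain whose maximal ideal contains the nonzero element $x$, so $\dim S_\n \geq 1$, and a standard construction (iterated blowups of the maximal ideal to reduce to dimension one, followed by Krull--Akizuki to normalize) produces a DVR $V$ between $S_\n$ and $K$ that dominates $S_\n$. Then $R \subseteq V$ and $\m_V \cap R = \m_V \cap S \cap R = \n \cap R = \m$, so $V$ has center $\m$ on $R$. Finally, $J \subseteq \n \subseteq \m_V$ and $IS = x J S$, whence $IV = x J V \subseteq x\, \m_V$, so $v(IV) > v(x)$ and $x \notin IV$, contradicting the hypothesis of the lemma.
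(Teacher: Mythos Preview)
The paper does not prove this lemma; it is quoted as \cite[Lemma~3.4]{Hu3D} and used as a black box.  So there is no in-paper argument to compare against.

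Your proof is correct and is essentially the classical argument attributed to Lipman.  The key idea---forming $S=R[I/x]$ and showing $\m S + (I/x)S$ is proper by translating a putative equation $1=j+m$ into an integral-dependence relation for $x$ over $I$---is exactly the standard one.  The remaining steps (choosing $\n\supseteq \m S + J$, dominating $S_\n$ by a DVR via Krull--Akizuki or an equivalent construction, and reading off $IV=xJV\subseteq x\m_V$) are routine and correctly carried out.

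Two small remarks on the corner cases.  First, your phrase ``the hypothesis holds vacuously'' when $x\notin\m$ is slightly off: what you mean (and what your parenthetical actually says) is that for any $\m$-centered DVR $V$ one has $IV\subseteq\m_V$ while $x$ is a unit, so $x\notin IV$ and the contrapositive conclusion is immediate---provided some such $V$ exists, i.e.\ provided $\dim R\geq 1$.  Second, the lemma is vacuous or trivially true when $R$ is a field (there are no DVRs between $R$ and $K=R$, and the only ideals are $0$ and $R$), so no harm is done, but it is worth saying explicitly that one may assume $\dim R\geq 1$ before invoking the existence of an $\m$-centered DVR.
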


Next, note the following `asymptotic' property associated with the definition of $-\rm sp$.

\begin{lemma}\label{lem:spbarasymptotic}
Let $(R,\m)$ be a Noetherian local ring, $I \subseteq R$ a proper ideal, $x \in R$, and $n_0 \in \N_+$.
If $x^{n_0} \in \overline{\m I^{n_0}}$, then $x^n \in \overline{\m I^n}$ for all $n \geq n_0$.
\end{lemma}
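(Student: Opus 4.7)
The plan is to factor $x^n = x^{n_0} \cdot x^{n - n_0}$ for $n \geq n_0$ and exploit the multiplicativity of integral closure, namely $\overline{J}\,\overline{K} \subseteq \overline{JK}$. Once we know that $x^{n-n_0} \in \overline{I^{n-n_0}}$, the hypothesis immediately gives
\[
x^n \in \overline{\m I^{n_0}} \cdot \overline{I^{n-n_0}} \subseteq \overline{\m I^{n_0} \cdot I^{n-n_0}} = \overline{\m I^n},
\]
which is the conclusion. So the essential content of the lemma reduces to the assertion $x \in \overline{I}$, since from there $x^{n-n_0} \in \overline{I}^{\,n-n_0} \subseteq \overline{I^{n-n_0}}$ by another application of multiplicativity.

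The main---and I expect essentially only nontrivial---step is therefore to deduce $x \in \overline{I}$ from $x^{n_0} \in \overline{I^{n_0}}$ (which follows from the hypothesis, since $\m I^{n_0} \subseteq I^{n_0}$). For this I would pass to the Rees algebra $R[IT] \subseteq R[T]$. A standard homogenization shows that $\overline{I^k}\, T^k \subseteq \overline{R[IT]}$ for every $k \geq 0$: if $y \in \overline{I^{n_0}}$ satisfies $y^k + a_1 y^{k-1} + \cdots + a_k = 0$ with $a_i \in I^{n_0 i}$, then $y T^{n_0}$ satisfies the monic polynomial $Y^k + (a_1 T^{n_0})Y^{k-1} + \cdots + (a_k T^{n_0 k}) = 0$ with coefficients in $R[IT]$. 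Applying this with $y = x^{n_0}$ puts $(xT)^{n_0} \in \overline{R[IT]}$, so $xT$ is a root of the monic polynomial $Y^{n_0} - x^{n_0} T^{n_0}$ over $\overline{R[IT]}$; by transitivity of integrality, $xT \in \overline{R[IT]}$. Reading off the degree-one part yields $x \in \overline{I}$.

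The hardest piece is really this Rees-algebra step, which amounts to the well-known description of $\overline{R[IT]}$ as a graded subring of $R[T]$ whose $n$-th graded piece is $\overline{I^n}\,T^n$. A valuation-theoretic alternative is also available: each Rees valuation $v$ of $I$ satisfies $n_0\, v(x) = v(x^{n_0}) \geq v(\overline{I^{n_0}}) = n_0\, v(I)$, so $v(x) \geq v(I)$ for every Rees valuation, forcing $x \in \overline{I}$ by the standard valuative criterion (in the spirit of Lemma~\ref{lem:Lipman} and of the reasoning used in Proposition~\ref{prop:Reesvalsp}).
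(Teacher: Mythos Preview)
Your proof is correct but follows a genuinely different route from the paper's. The paper first reduces to the domain case via minimal primes, then for every $\m$-centered discrete valuation $v$ derives from the hypothesis the inequality $v(x) \geq v(\m)/n_0 + v(I)$; for $n \geq n_0$ this gives $n\,v(x) \geq v(\m) + n\,v(I) = v(\m I^n)$, and Lipman's lemma (Lemma~\ref{lem:Lipman}) then yields $x^n \in \overline{\m I^n}$. Your argument instead uses multiplicativity $\overline{J}\,\overline{K} \subseteq \overline{JK}$ together with the Rees-algebra step extracting $x \in \overline{I}$ from $x^{n_0} \in \overline{I^{n_0}}$. This is more purely algebraic: it avoids both the passage to quotient domains and the appeal to Lipman's lemma, trading them for the standard facts $\overline{J}\,\overline{K} \subseteq \overline{JK}$ and $\overline{R[IT]} = \bigoplus_n \overline{I^n}\,T^n$ (the latter valid over any Noetherian ring, not just domains). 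Either approach is clean; the paper's valuation viewpoint dovetails with the surrounding arguments in the section, which repeatedly invoke Lemma~\ref{lem:Lipman}, while yours is self-contained and arguably more portable.
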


\begin{proof}
First assume that $R$ is an integral domain.  Let $V$ be any discrete valuation ring
between $R$ and the quotient field $K$ of $R$ whose center on $R$ is $\m$, and let
$v$ be its associated discrete valuation on $K$.  Then we
have \[
n_0 v(x) = v(x^{n_0}) \geq v(\overline{\m I^{n_0}}) = v(\m) + n_0 v(I)
\]
so that $v(x) \geq \frac{v(\m)}{n_0} + v(I)$.  Then for any $n \geq n_0$, we have
$v(x) \geq \frac{v(\m)}{n} + v(I)$, so that \[
v(x^n) = n v(x) \geq v(\m) + n v(I) = v(\m I^n).
\]
That is, $x^n \in \m I^n V$ for all such $V$.  Hence, by Lemma~\ref{lem:Lipman},
$x^n \in \overline{\m I^n}$.

If we drop the assumption that $R$ is a domain, the result follows immediately from
the fact that integral closure in $R$ can be computed modulo the minimal primes
of $R$.
\end{proof}

\begin{lemma}
$x \in \spintcl{I}$ if and only if this is true modulo all minimal primes of $R$.
\end{lemma}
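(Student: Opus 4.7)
The plan is to exploit the two facts that integral closure of ideals in a Noetherian ring can be tested modulo each minimal prime, and that the defining condition for $\spintcl{I}$ is asymptotic in $n$ (Lemma~\ref{lem:spbarasymptotic}), so that the individual exponents witnessing membership modulo each minimal prime can be synchronized into a single exponent that works over $R$.

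For the forward direction, suppose $x \in \spintcl{I}$, so some $n$ satisfies $x^n \in \intcl{(\m I^n)}$. For a minimal prime $\p$ of $R$, the quotient map $R \to R/\p$ sends $\m I^n$ to $(\m/\p)\,(IR/\p)^n$, and integral closure is persistent under ring homomorphisms, so the image $\bar{x}$ of $x$ satisfies $\bar{x}^n \in \intcl{(\m/\p)\,(IR/\p)^n}$ in $R/\p$. Thus $\bar{x} \in \spintcl{(IR/\p)}$.

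For the reverse direction, let $\p_1, \ldots, \p_t$ be the minimal primes of $R$ and assume that for each $i$ the image $\bar{x}_i$ of $x$ in $R/\p_i$ lies in $\spintcl{(IR/\p_i)}$. Then there are positive integers $n_i$ with $\bar{x}_i^{n_i} \in \intcl{(\m/\p_i)(IR/\p_i)^{n_i}}$. The quotient $R/\p_i$ is itself a Noetherian local ring, so applying Lemma~\ref{lem:spbarasymptotic} inside $R/\p_i$ shows that the containment $\bar{x}_i^n \in \intcl{(\m/\p_i)(IR/\p_i)^n}$ in fact holds for all $n \geq n_i$. Setting $n := \max_i n_i$, we obtain a single exponent such that, for every minimal prime $\p_i$, the image of $x^n$ in $R/\p_i$ lies in the integral closure of the image of $\m I^n$. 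Since integral closure of an ideal in $R$ is determined by its images modulo the minimal primes, this gives $x^n \in \intcl{(\m I^n)}$, so $x \in \spintcl{I}$.

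The only mildly delicate point is the application of Lemma~\ref{lem:spbarasymptotic} to secure a uniform $n$; without that asymptotic statement, one would be stuck with a different exponent for each minimal prime and no way to amalgamate them, since the integral-closure-modulo-minimal-primes principle requires the same power of $x$ to land in $\intcl{\m I^n}$ simultaneously modulo every $\p_i$. Once the uniform $n$ is in hand, the rest is just invoking the standard minimal-primes reduction for integral closure already used in the proof of Lemma~\ref{lem:spbarasymptotic}.
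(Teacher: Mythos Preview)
Your proof is correct and follows precisely the approach the paper indicates: the paper's one-line proof says the lemma ``follows immediately from the above lemma [Lemma~\ref{lem:spbarasymptotic}] and the fact that the corresponding statement is true for integral closure,'' and you have simply filled in those details, using the asymptotic lemma in each $R/\p_i$ to synchronize the exponents and then invoking the minimal-primes criterion for integral closure.
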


\begin{proof}
This follows immediately from the above lemma and the fact that the corresponding 
statement is true for integral closure.
\end{proof}

\begin{prop}\label{pr:-sp}
$-\rm sp$ is a special part of integral closure, in the sense
of Definition~\ref{def:csp}.
\end{prop}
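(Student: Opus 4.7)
The plan is to verify the four axioms of Definition~\ref{def:csp} in order, with $c = -$ and $c\rm sp = -\rm sp$. Axioms (1) and (2) are essentially at hand: (1) was already established in the proposition earlier in this section, and for (2) the inclusion $\m I \subseteq \spintcl{I}$ comes from taking $n = 1$ in the definition, while $\spintcl{I} \subseteq \intcl{I}$ follows because an equation of integral dependence of $x^n$ over $I^n$ rescales to one for $x$ over $I$.

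Axiom (3) has two halves. For $\spintcl{(\intcl{I})} = \spintcl{I}$: the $\supseteq$ direction was already recorded as monotonicity, and for $\subseteq$ I would use $\m (\intcl{I})^n \subseteq \m \intcl{(I^n)} \subseteq \intcl{\m I^n}$ (two applications of the standard $\intcl{A} \cdot \intcl{B} \subseteq \intcl{AB}$), which forces $\overline{\m (\intcl{I})^n} \subseteq \overline{\m I^n}$. For $\intcl{(\spintcl{I})} = \spintcl{I}$, the nontrivial direction is $\subseteq$. Because $R$ is Noetherian, $\spintcl{I}$ is a finitely generated ideal with generators $y_1, \ldots, y_s$ satisfying $y_j^{n_j} \in \overline{\m I^{n_j}}$. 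Lemma~\ref{lem:spbarasymptotic} upgrades each such to $y_j^n \in \overline{\m I^n}$ for every $n \geq n_j$. Setting $N := \max_j n_j$, a pigeonhole argument on a monomial $y^\alpha = \prod_j y_j^{\alpha_j}$ with $|\alpha| = n \geq Ns$ produces some $\alpha_{j_0} \geq N$: then $y_{j_0}^{\alpha_{j_0}} \in \overline{\m I^{\alpha_{j_0}}}$ while the remaining $y_j^{\alpha_j}$ sit in $\overline{I^{\alpha_j}}$ (since each $y_j \in \intcl{I}$), and the product lies in $\overline{\m I^n}$. Thus $(\spintcl{I})^n \subseteq \overline{\m I^n}$ for all $n \geq Ns$. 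Since $x \in \intcl{(\spintcl{I})}$ implies $x^n \in \intcl{((\spintcl{I})^n)}$ for every $n$, choosing $n \geq Ns$ yields $x^n \in \intcl{\overline{\m I^n}} = \overline{\m I^n}$, hence $x \in \spintcl{I}$.

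For axiom (4), suppose $J \subseteq I \subseteq \intcl{J + \spintcl{I}}$; I must conclude $I \subseteq \intcl{J}$. The preceding lemma reduces to the case that $R$ is a domain with quotient field $K$. By Lemma~\ref{lem:Lipman}, it suffices to verify $v(x) \geq v(J)$ for every $x \in I$ and every DVR $V$ between $R$ and $K$ whose valuation $v$ is centered on $\m$. For such a $v$ we have $v(\m) > 0$, and for each $y \in \spintcl{I}$ with $y^{n_y} \in \overline{\m I^{n_y}}$ we get $v(y) \geq v(\m)/n_y + v(I) > v(I)$. Finite generation of $\spintcl{I}$ then yields the strict inequality $v(\spintcl{I}) > v(I)$. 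The hypothesis gives $v(I) \geq v(J + \spintcl{I}) = \min(v(J), v(\spintcl{I}))$; if $v(\spintcl{I}) \leq v(J)$ we would arrive at $v(I) \geq v(\spintcl{I}) > v(I)$, absurd, so $v(J) \leq v(\spintcl{I})$ and $v(I) \geq v(J)$, as needed.

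Axiom (4) is the main obstacle. The crucial point is that Lipman's lemma restricts attention to DVRs centered at $\m$, where $v(\m) > 0$ forces the strict gap $v(\spintcl{I}) > v(I)$; at DVRs with smaller centers that gap can collapse and this route would fail.
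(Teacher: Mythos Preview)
Your proof is correct and follows the same overall architecture as the paper's: axioms (1) and (2) are immediate, and the Nakayama axiom is handled by reducing to a domain and invoking Lipman's Lemma~\ref{lem:Lipman} over all $\m$-centered valuations, with Lemma~\ref{lem:spbarasymptotic} supplying the needed uniformity. (Incidentally, the paper's proof labels the Nakayama property as ``(3)'' and the commutation property as ``(axiom 4)'', reversed from Definition~\ref{def:csp}; your numbering matches the definition.)

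Where you diverge is in the execution, and in each case your route is shorter. For the commutation axiom you avoid valuations altogether: the inclusion $\spintcl{(\intcl{I})} \subseteq \spintcl{I}$ comes directly from $\intcl{A}\cdot\intcl{B} \subseteq \intcl{AB}$, and for $\intcl{(\spintcl{I})} \subseteq \spintcl{I}$ your pigeonhole argument on monomials in a fixed generating set gives $(\spintcl{I})^n \subseteq \overline{\m I^n}$ for all large $n$, after which $x^n \in \overline{(\spintcl{I})^n}$ finishes it. The paper instead passes to a domain and runs separate valuation computations for each half. For the Nakayama axiom, the paper first proves $(\spintcl{I})^{\mu n} \subseteq \overline{\m I^{\mu n}}$ (the same kind of uniform bound you used for axiom (3)), then unpacks the reduction relation $I^{r+1} \subseteq (J + \spintcl{I}) I^r$ and computes with $(I^m)^{2r+2}$ before reaching $v(I) \geq v(J)$. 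Your version bypasses this: you observe directly that finite generation of $\spintcl{I}$ plus Lemma~\ref{lem:spbarasymptotic} forces $v(\spintcl{I}) > v(I)$ at every $\m$-centered valuation, and then a one-line min comparison gives $v(I) \geq v(J)$. The paper's approach buys nothing extra here; yours is simply more economical.
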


\begin{proof}
We already showed that $\spintcl{I}$ is an ideal (i.e. axiom 1 of the definition), and it is clear
from the definitions that $\m I \subseteq \spintcl{I} \subseteq \intcl{I}$ (axiom 2).

Now for (3), suppose that $J \subseteq I \subseteq (J + \spintcl{I})^-$. 
First, we may assume without loss of generality that $I$ is integrally closed.
Next, recall that when we were proving that $\spintcl{I}$ is an ideal, we showed
that for any $n$ and any $x, y \in R$, if $x^n, y^n \in \intcl{(\m I^n)}$ then
$(x+y)^n \in \intcl{(\m I^n)}$.  It follows easily from this fact along with
Lemma~\ref{lem:spbarasymptotic} that there is some $n$ such that for any 
$x \in \spintcl{I}$, $x^n \in \intcl{(\m I^n)}$.  Thus, if $\mu = \mu(I)$, then \[
(\spintcl{I})^{\mu n} = (\spintcl{I})^{n} (\spintcl{I})^{n (\mu - 1)} \subseteq \intcl{(\m I^n)} I^{n(\mu - 1)}
\subseteq \intcl{(\m I^{\mu n})}
\]

There is some $r$ with $I^{r+1} \subseteq (J + \spintcl{I}) I^r$.  Then letting
$\mu = \mu(I)$ and $m = \mu n$, \begin{align*}
(I^m)^{2 r + 2} &= (I^{r+1})^{2 m} \subseteq (J^m + (\spintcl{I})^m)(J + \spintcl{I})^m I^{2 r m} \\
&\subseteq (J^m + \intcl{(\m I^m)})(I^m)^{2 r + 1}
\end{align*}

Now, after modding out by a minimal prime, we may assume
that $R$ is a domain.  Let $v$ be any $\m$-centered valuation.  Then \[
(2 r + 2) v(I^m) = v((I^m)^{2 r + 2}) \geq v(J^m + \intcl{(\m I^m)}) + (2 r + 1) v(I^m),
\]
so that \[
v(I^m) \geq \min\{ v(J^m), v(\intcl{(\m I^m)}) \} \geq \min \{ v(J^m), 1 + v(I^m) \}.
\]
Thus, $m v(I) = v(I^m) \geq v(J^m) = m v(J)$, which 
means that $v(I) \geq v(J)$.  Since this holds for all $\m$-centered valuations $v$, it follows
from Lemma~\ref{lem:Lipman} that $I \subseteq \intcl{J}$. 

Finally we prove (axiom 4) for integral closure.  Note first that for any minimal prime $\p$, \[
\frac{\spintcl{I} + \p}{\p} = \spintcl{\left(\frac{I + \p}{\p}\right)}
\]
and \[
\frac{\intcl{I} + \p}{\p} = \intcl{\left(\frac{I + \p}{\p}\right)}.
\]
Hence, if it holds for integral domains, then we have for any
minimal prime $\p$ of $R$: \[
\frac{\intcl{(\spintcl{I})} + \p}{\p} = \intcl{\left(\spintcl{\left(\frac{I + \p}{\p} \right)} \right)}
 = \spintcl{\left( \frac{I + \p}{\p} \right)} = \frac{\spintcl{(\intcl{I})} + \p}{\p}.
\]
Thus (axiom 4) for integral closure holds in $R$.  So we may assume from now on that $R$ is an integral domain.

Suppose $x \in \spintcl{(\intcl{I})}$.  Then for some positive integer $n$, we 
have $x^n \in \intcl{\left(\m \left(\overline{I}\right)^n\right)}$.  Hence, by Lemma~\ref{lem:Lipman},
for any valuation $v$ on $K$ centered on $\m$ in $R$, where $K$ is the fraction field of $R$, we have: \[
v(x^n) \geq v\left(\m \left(\overline{I}\right)^n\right) = v(\m) + n v(\bar{I}) = v(\m) + n v(I) = v(\m I^n).
\]
Hence, by Lemma~\ref{lem:Lipman} again, $x^n \in \overline{(\m I^n)}$, so that $x \in \spintcl{I}$.

Now let $x \in \intcl{(\spintcl{I})}$.  Then there is some integer $r$ and some elements 
$a_i \in (\spintcl{I})^i$ for $1 \leq i \leq r$ such that \[
x^r = \sum_{i=1}^r a_i x^{r-i}.
\]
Take any valuation $v$ of $K$ centered on $\m$ in $R$.  Then \[
r v(x) = v(x^r) \geq \min \{v(a_i) + (r-i) v(x) \mid 1 \leq i \leq r \}.
\]
In particular, for each $v$ there exists some $i$ between $1$ and $r$ (dependent on $v$)
such that $r v(x) \geq v(a_i) + (r-i) v(x)$.  Hence, \[
v(x) \geq \frac{v(a_i)}{i}.
\]
But
there exists some $t$ such that for all $j$, $a_j^t \in \intcl{(\m I^{j t})}$, so that in
particular, \[
v(a_i) \geq \frac{v(\m)}{t} + i v(I).
\]
Combining the latest two displayed equations, we have \[
v(x^{r t}) = r t v(x) \geq \frac{r v(\m)}{i} + r t v(I) \geq v(\m) + r t v(I) = v(\m I^{r t}),
\]
since $r \geq i$.  Noting that $r$ and $t$ are independent of the choice of $v$, Lemma~\ref{lem:Lipman}
then implies that $x^{r t} \in \intcl{(\m I^{r t})}$, so that $x \in \spintcl{I}$.
\end{proof}

 \section{Evolutions and Fermat's Last Theorem}
After Eisenbud and Mazur \cite{EisMaz} connected ``evolutionary stability'' and the Wiles-Taylor proof
of Fermat's last theorem with symbolic squares, H\"ubl \cite{H:ueval} related their
methods to certain questions about integral closure of ideals, as well as the fiber
cone of and associated graded ring to an ideal.

In particular, he showed that if $k$ is a field of characteristic 0, and $S$ is a reduced
local algebra essentially of finite type over $k$, then $S/k$ is evolutionarily stable
if and only if it has a presentation $S = R/I$, $R/k$ smooth, such that $(R,I)$ satisfies
the following condition ``(NN)'': \[
I \cap \{f \in R \mid \exists n \text{ such that } f^n \in I^{n+1} \} = \m I.
\]

In section 3 of his paper, H\"ubl considers the following conditions on a ring and an ideal
$(R,I)$.  (MR) says that
if $f \in I \setminus \m I$, then $f$ is contained in some minimal reduction of $I$.
Condition (AR) says that \[
I \cap \{f \in R \mid \exists n \text{ such that } x^n \in \m I^n \} = \m I.
\]
Call the following condition (SP): \[
I \cap \spintcl{I} = \m I.
\]
Clearly (MR) $\Rightarrow$ (SP) $\Rightarrow$ (AR) $\Rightarrow$ (NN), with none of the
arrows reversible.  Moreover, since $-\rm sp$ is in fact a special part of integral closure (Proposition~\ref{pr:-sp}),
if follows from Lemma~\ref{lem:capmL}
that whenever $I$ is bar-independent, it satisfies (SP), hence also (NN).

Thus, if $R$ is a regular local ring essentially of finite type over a field $k$
of characteristic 0, and $I$ is a radical bar-independent ideal (e.g. $I$ may be
a radical ideal with no proper reductions), $R/I$ is an evolutionarily stable
algebra over $k$.

 \section{Special tight closure Brian\c{c}on-Skoda theorems}

The history of ``Brian\c{c}on-Skoda theorems'' goes back more than 35 years and 
could itself be the subject of a short essay.  The original theorem, proved in 1974
by Brian\c{c}on and Skoda is as follows (with notation slightly altered):

\begin{thm}\label{thm:BS}\cite[Th{\'e}or{\`e}me 3]{BriSkoda} 
Let $I$ be an $n$-generated ideal in the convergent power series ring $R=\C\{z_1, \dotsc, z_d\}$.  Then if $t=\inf\{n, d\}$, $(I^-)^t \subseteq I$.
\end{thm}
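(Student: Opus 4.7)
The plan is to deduce this from the positive-characteristic Brian\c{c}on--Skoda theorem of Hochster--Huneke via reduction modulo primes. First, since $R=\C\{z_1,\dotsc,z_d\}$ is a regular local ring of dimension $d$ with infinite residue field, $I$ admits a minimal reduction $J\subseteq I$ generated by $\mu(J)=\ell(I)\le d$ elements. As $\intcl{J}=\intcl{I}$ and $t\ge\ell(I)$, it suffices to prove $(\intcl{J})^{\ell(I)}\subseteq J$; hence I may replace $I$ by $J$ and assume from the outset that $n\le d$, so that $t=n$.

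Next comes descent. Verifying $(\intcl{I})^t\subseteq I$ requires only finitely many pieces of data: $n$ generators of $I$, a finite generating set of the (Noetherian) ideal $\intcl{I}$, and finitely many integral-dependence equations witnessing membership in $\intcl{I}$. One may therefore descend to a finitely generated $\Z$-subalgebra $A\subseteq\C$ and work inside $R_A:=A[[z_1,\dotsc,z_d]]$, passing first through the faithfully flat completion $\C[[z_1,\dotsc,z_d]]$ (in which integral closure of ideals is preserved). For all but finitely many maximal ideals $\q\subseteq A$, the specialization $R_A/\q R_A=(A/\q)[[z_1,\dotsc,z_d]]$ is a regular local ring of prime characteristic $p>0$; the image $I_\q$ of $I_A$ is an $n$-generated ideal, and $\intcl{I_\q}$ receives the images of the prescribed generators of $\intcl{I}$.

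In each such positive-characteristic fiber, the tight-closure Brian\c{c}on--Skoda theorem of Hochster--Huneke gives $(\intcl{I_\q})^n\subseteq(I_\q)^*$, and regularity of the fiber forces $(I_\q)^*=I_\q$. Thus $(\intcl{I_\q})^n\subseteq I_\q$ holds in almost all fibers, and a standard generic-freeness argument then pushes the containment back up to $R_A$, and hence into $R$.

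The hardest part will be the descent step, because neither the convergent power series ring $R$ nor its completion is essentially of finite type over $\Z$. The usual workaround, sketched above, is to pass through the completion and then descend inside the formal power series ring over a finitely generated subring of $\C$, verifying along the way that the equations certifying membership in $\intcl{I}$ have coefficients that can be taken in the smaller subring and that regularity of the specialized fibers holds generically. A purely algebraic alternative that avoids descent entirely is the Lipman--Sathaye Jacobian-ideal proof, but it trades the descent difficulty for a rather different technical package.
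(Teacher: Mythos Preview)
The paper does not give its own proof of this theorem: it is quoted as the original analytic result of Brian\c{c}on and Skoda, with only a citation. The paper's contribution in this section is the \emph{special} variants (Proposition~\ref{prop:spbc} and Corollary~\ref{cor:spbc}), not a new proof of Theorem~\ref{thm:BS}. So there is nothing to compare against directly.

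That said, your outline has a real gap, and you flag it yourself: the descent step does not go through as written. The standard reduction-to-characteristic-$p$ machinery is built for algebras essentially of finite type over a field or over $\Z$; neither $\C\{z_1,\dotsc,z_d\}$ nor $\C[[z_1,\dotsc,z_d]]$ is of that form, and ``descend inside $A[[z_1,\dotsc,z_d]]$ for a finitely generated $\Z$-algebra $A$'' is not a move that the usual generic-freeness package supports. Elements of $\C[[z]]$ typically involve infinitely many complex coefficients, so there is no obvious finitely generated $A$ containing all the data; even if you arrange one for the given generators and integral equations, the lifting step (``push the containment back up to $R_A$'') needs a faithful-flatness or approximation argument you have not supplied. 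Making this honest requires either Artin approximation (to reduce to an affine situation) or the full Hochster--Huneke equal-characteristic-zero framework, neither of which is a routine add-on. Your own final paragraph essentially concedes this.

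If you want an algebraic proof that actually closes, the Lipman--Sathaye route (Theorem~\ref{thm:BS-LipSath}) is the honest one: it applies to arbitrary regular local rings, hence to $\C\{z_1,\dotsc,z_d\}$, with no descent needed.
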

 
An algebraic proof, which generalized the theorem
to \emph{all} regular local rings, was given in 1981 by Lipman and Sathaye:

\begin{thm}\label{thm:BS-LipSath}\cite[special case of Theorem 1]{LipSath}
Let $I$ be an ideal in a regular local ring $R$, let $\ell$ be the analytic spread of $I$, and let $w \geq 0$ be an integer.  Then $\intcl{(I^{\ell+w})} \subseteq I^{w+1}$.
\end{thm}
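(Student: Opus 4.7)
The plan is to deduce Theorem~\ref{thm:BS-LipSath} from the Hochster-Huneke tight closure Brian\c{c}on-Skoda theorem, exploiting that in a regular ring every ideal is tightly closed. This approach fits squarely into the closure-operation framework of the paper, and in equal characteristic is significantly less technical than the original Jacobian-based proof.

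The first step is to replace $I$ by a minimal reduction $J$. By the Northcott-Rees theory, every minimal reduction of $I$ is generated by exactly $\ell = \ell(I)$ elements. The key observation is that $J$ being a reduction of $I$ gives $I^{n+1} = J I^n$ for $n \gg 0$; iterating yields $I^{n+k} = J^k I^n$ for $n \gg 0$, so $J^k$ is itself a reduction of $I^k$ for every $k \geq 1$, and therefore $\intcl{(J^k)} = \intcl{(I^k)}$. In particular $\intcl{(I^{\ell+w})} = \intcl{(J^{\ell+w})}$, and since $J^{w+1} \subseteq I^{w+1}$ it suffices to prove $\intcl{(J^{\ell+w})} \subseteq J^{w+1}$.

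The second step is the tight closure Brian\c{c}on-Skoda theorem of Hochster-Huneke: for any ideal $J = (x_1, \dotsc, x_\ell)$ of a Noetherian ring of prime characteristic, $\intcl{(J^{\ell+w})} \subseteq (J^{w+1})^*$. The proof is a clean pigeonhole calculation, which I would carry out: if $z \in \intcl{(J^{\ell+w})}$ then some nonzero test element $c$ satisfies $c z^q \in J^{q(\ell+w)}$ for all $q=p^e$. A generating monomial $x_1^{a_1}\cdots x_\ell^{a_\ell}$ of $J^{q(\ell+w)}$ has $\sum a_i = q(\ell+w)$; writing $a_i = q t_i + s_i$ with $0 \leq s_i < q$, the bound $\sum s_i < q\ell$ forces $\sum t_i \geq w+1$, so the monomial lies in $(J^{w+1})^{[q]}$. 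Hence $c z^q \in (J^{w+1})^{[q]}$ for all $q$, i.e.\ $z \in (J^{w+1})^*$. Since $R$ is regular, faithful flatness of Frobenius makes every ideal tightly closed, so $(J^{w+1})^* = J^{w+1}$ and we are done.

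The main obstacle is characteristic zero and mixed characteristic. Equal characteristic zero can be handled by Hochster-Huneke's reduction-mod-$p$ machinery: descend the data to a finitely generated $\Z$-subalgebra, apply the prime-characteristic case fiber-by-fiber, and spread back out. Mixed characteristic, however, lies outside classical tight closure and would require either the Jacobian-ideal machinery of \cite{LipSath} itself or, more recently, perfectoid/big Cohen-Macaulay closure methods. I would therefore present the tight closure proof cleanly in equal characteristic and simply cite \cite{LipSath} for the remaining mixed-characteristic case.
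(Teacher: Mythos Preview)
The paper does not give its own proof of Theorem~\ref{thm:BS-LipSath}; it is stated purely as a citation to \cite{LipSath}. In fact, the surrounding text already makes exactly the observation your proposal elaborates: immediately after stating Theorem~\ref{thm:BS-HH}, the paper remarks that it implies Theorem~\ref{thm:BS-LipSath} \emph{when $R$ contains a field}, because $(I^{w+1})^* = I^{w+1}$ in a regular ring, and it explicitly notes that the original Lipman--Sathaye argument is ``very difficult'' while the tight closure route does \emph{not} cover mixed characteristic. So your proposal is not an alternative to the paper's proof; it is a fleshed-out version of the paper's own commentary, and in the end you too are forced to cite \cite{LipSath} for the mixed-characteristic case---which is precisely the case that makes the theorem as stated nontrivial beyond Theorem~\ref{thm:BS-HH}.

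Within equal characteristic your argument is essentially correct, with one small gap worth flagging: the Northcott--Rees statement that a minimal reduction of $I$ is generated by exactly $\ell = \ell(I)$ elements requires the residue field to be infinite. You should either insert the standard passage to $R(X) = R[X]_{\m R[X]}$ (which is still regular local, now with infinite residue field, and is faithfully flat over $R$ so the containment descends), or simply invoke Theorem~\ref{thm:BS-HH} as stated---it already uses analytic spread rather than number of generators---and skip your first step entirely.
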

This is a generalization because the analytic spread of an ideal
is bounded above by both the number of generators of the ideal and the dimension of the ring.

In 1990, Hochster and Huneke gave a tight closure proof, generalizing the Brian\c{c}on-Skoda Theorem to \emph{all} rings of characteristic $p$ (and later, for rings of equal characteristic zero, after tight closure was well-defined for such rings), but not including the mixed characteristic case:

\begin{thm}\label{thm:BS-HH}(\cite[Theorem 5.6]{HHmain} and \cite[Theorem 4.1.5]{HH-tcz})
Let $I$ be an ideal in a Noetherian local ring $R$ of equal characteristic, let $I$ be an ideal of analytic spread $\ell$, and let $w \geq 0$ be an integer.  Then $\intcl{(I^{\ell+w})} \subseteq (I^{w+1})^*$.
\end{thm}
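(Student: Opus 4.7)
The plan is to follow the standard Hochster--Huneke tight closure argument, which proceeds by reduction to a minimal reduction followed by a pigeonhole estimate on monomial degrees. First I would reduce to the case where $I$ itself is generated by exactly $\ell$ elements: pick a minimal reduction $J \subseteq I$ with $\mu(J) = \ell$ (for $k$ infinite this is classical; if $k$ is finite, pass to $R[T]_{\m R[T]}$, which is faithfully flat and preserves both integral and tight closure). Since $\intcl{(J^{\ell+w})} = \intcl{(I^{\ell+w})}$ and $(J^{w+1})^* \subseteq (I^{w+1})^*$, it suffices to prove $\intcl{(J^{\ell+w})} \subseteq (J^{w+1})^*$ with $J = (x_1,\dots,x_\ell)$.

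In characteristic $p > 0$, the combinatorial core is the containment $J^{(\ell+w)q} \subseteq (J^{w+1})^{[q]}$ for every $q = p^e$. This follows by pigeonhole on monomial exponents: writing a monomial $x_1^{a_1}\cdots x_\ell^{a_\ell}$ of total degree $(\ell+w)q$ with $a_i = b_i q + r_i$ and $0 \le r_i < q$, one has $\sum r_i \le \ell(q-1)$, forcing $\sum b_i \ge w+1$, so the factor $x_1^{b_1 q}\cdots x_\ell^{b_\ell q}$ already lies in $(J^{w+1})^{[q]}$. Given $z \in \intcl{(J^{\ell+w})}$, after reducing modulo minimal primes we may assume $R$ is a domain, and then there exists $c \ne 0$ with $c z^n \in (J^{\ell+w})^n$ for every $n \ge 1$. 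Specializing $n = q$ yields $c z^q \in J^{(\ell+w)q} \subseteq (J^{w+1})^{[q]}$, which is precisely the condition placing $z \in (J^{w+1})^*$.

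The main obstacle is the equal characteristic zero case, where Frobenius is unavailable directly and one must invoke the Hochster--Huneke tight closure framework for equal characteristic zero \cite{HH-tcz}: the desired inclusion is first-order in finitely many coefficient data, so one reduces to a family of prime-characteristic models, applies the argument above uniformly across the family, and lifts back. The pigeonhole step transfers without change; the delicate part is keeping track of the multiplier $c$ (and the accompanying test element structure) through this reduction, but this is exactly what the equal-characteristic-zero framework is designed to handle.
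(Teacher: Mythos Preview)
The paper does not give its own proof of this theorem; it is quoted from \cite{HHmain} and \cite{HH-tcz} purely as background for the ``special'' Brian\c{c}on--Skoda results that follow. Your argument is exactly the standard Hochster--Huneke proof from those references, and the steps you outline---reduction to an $\ell$-generated minimal reduction via faithfully flat extension if necessary, the pigeonhole inclusion $J^{(\ell+w)q} \subseteq (J^{w+1})^{[q]}$, the multiplier characterization of integral closure after passing to a domain, and the reduction-to-characteristic-$p$ machinery of \cite{HH-tcz} for the equicharacteristic zero case---are all correct.
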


The reason why this generalizes
Theorem~\ref{thm:BS} (and Theorem~\ref{thm:BS-LipSath} when $R$ contains a field) is that
$(I^{w+1})^* = I^{w+1}$ if $R$ is regular.  Theorem~\ref{thm:BS-HH} is really a theorem about the tight closure of an ideal capturing the integral closure
of a not-much-higher power of that ideal.  It is noteworthy that although Theorem~\ref{thm:BS-LipSath} has a very difficult proof, Theorem~\ref{thm:BS-HH} (at least in characteristic $p$) is extremely easy once the foundations
of tight closure theory are laid down.

Similarly, we can prove ``special'' versions, as follows:

\begin{prop}[Special tight closure Brian\c{c}on-Skoda theorem]\label{prop:spbc}
Let $(R,\m)$ be a Noetherian local ring of characteristic $p>0$, and $I$ a proper ideal
of $R$.  If $n = \mu(I)$ and $w$ is any nonnegative integer, then \[
\spintcl{(I^{n+w})} \subseteq \sptc{(I^{w+1})}
\]
\end{prop}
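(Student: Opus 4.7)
The plan is to adapt the characteristic-$p$ proof of the tight closure Briançon--Skoda theorem (Theorem~\ref{thm:BS-HH}) to the ``special'' setting, keeping track of both the factor of $\m$ and the Frobenius aspect. First I would reduce to the case where $R$ is a Noetherian local integral domain, using that $\spintcl{(-)}$ can be computed modulo minimal primes (the lemma just before Proposition~\ref{pr:-sp}) and that the analogous statement for $\sptc{(-)}$ is noted in Section~3. Next, take $x\in\spintcl{(I^{n+w})}$; by Lemma~\ref{lem:spbarasymptotic} applied to the ideal $I^{n+w}$, we may assume $x^{q_0}\in\overline{\m I^{q_0(n+w)}}$ with $q_0$ a power of $p$, and the standard domain characterization of integral closure produces a nonzero $c\in R$ with $cx^{q_0 k}\in\m^k I^{q_0 k(n+w)}$ for every $k\ge 1$.

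The key technical ingredient is the pigeonhole driving the proof of Theorem~\ref{thm:BS-HH}: because $\mu(I)=n$, every monomial of degree $(n+w)Q'$ in generators of $I$ factors as $\alpha^{Q'}\beta$ with $\alpha\in I^{w+1}$, so $I^{(n+w)Q'}\subseteq(I^{w+1})^{[Q']}$ whenever $Q'$ is a power of $p$. Applied with $Q'=q_0Q$ and $k=Q$, this yields the intermediate containment $cx^{q_0 Q}\in\m^Q(I^{w+1})^{[q_0 Q]}$ for every power $Q$ of $p$. To convert $\m^Q$ into the Frobenius power $\m^{[Q]}$ (needed since $(\m(I^{w+1})^{[q_0]})^{[Q]}=\m^{[Q]}(I^{w+1})^{[q_0 Q]}$), I would use the companion pigeonhole $\m^{dQ}\subseteq\m^{[Q]}$, where $d=\mu(\m)$: any product of $dQ$ generators of $\m$, when expanded, has some generator appearing at least $Q$ times. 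Taking $k=dQ$ rather than $k=Q$ therefore gives $cx^{q_0 dQ}\in\m^{[Q]}(I^{w+1})^{[q_0 Q]}=(\m(I^{w+1})^{[q_0]})^{[Q]}$ for every $Q$, hence $x^{q_0 d}\in(\m(I^{w+1})^{[q_0]})^*$ in the domain.

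The main obstacle is then upgrading this to an exponent that is itself a power of $p$, as required by the definition of $\sptc$: the integer $q_0 d$ generally is not a power of $p$ since $d=\mu(\m)$ is typically coprime to $p$. My plan is to combine Frobenius persistence---which propagates $x^{q_0 d}\in L^*$ to $x^{q_0 d p^e}\in(L^{[p^e]})^*=(\m(I^{w+1})^{[q_0 p^e]})^*$ for all $e\ge 0$---with a re-application of tight Briançon--Skoda to $\m$ itself, using $\ell(\m)\le d$. Concretely, the tight Briançon--Skoda theorem gives $\m^{dQ}\subseteq\overline{\m^{\ell(\m)+dQ-d}}\subseteq(\m^{dQ-d+1})^*\subseteq(\m^{[Q]})^*$, and then the ideal-product identity $A^*\cdot B\subseteq(AB)^*$ reroutes $cx^{q_0 Q}\in\m^Q(I^{w+1})^{[q_0 Q]}$ into $((\m(I^{w+1})^{[q_0]})^{[Q]})^*$ at the exponent $q_0 Q$ itself. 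Invoking a uniform test element of $R$ absorbs the $Q$-dependent witnesses of the outer tight closures and produces $c''x^{q_0 Q}\in\m^{[Q]}(I^{w+1})^{[q_0 Q]}$ for all $Q$, yielding $x^{q_0}\in(\m(I^{w+1})^{[q_0]})^*$ and thus $x\in\sptc{(I^{w+1})}$ with $q=q_0$. This final interplay of Frobenius persistence, a secondary Briançon--Skoda on $\m$, and a test-element argument is where the technical heart of the proof lies, and the most delicate bookkeeping step.
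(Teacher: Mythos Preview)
Your reduction to a domain and the two pigeonhole steps are exactly what the paper does, and the containment $c\,x^{q_0 d Q}\in (\m(I^{w+1})^{[q_0]})^{[Q]}$ for all $Q$ that you derive is correct. But the ``main obstacle'' you then identify is artificial. The inclusion $\m^{dQ}\subseteq\m^{[Q]}$ needs only $d\ge\mu(\m)$, not $d=\mu(\m)$; so simply take $d$ to be any power of $p$ with $d\ge\mu(\m)$. Then $q_0 d$ is itself a power of $p$, and running the $I$-pigeonhole with modulus $q_0 d Q$ rather than $q_0 Q$ gives
\[
c\,x^{q_0 d Q}\in \m^{[Q]}\,(I^{w+1})^{[q_0 d Q]}=\bigl(\m\,(I^{w+1})^{[q_0 d]}\bigr)^{[Q]}
\]
for all $Q$, whence $x^{q_0 d}\in\bigl(\m\,(I^{w+1})^{[q_0 d]}\bigr)^*$ and $x\in\sptc{(I^{w+1})}$ directly. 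This is precisely the paper's proof: it fixes powers of $p$ with $q_1\ge\mu(I)$ and $q_0\ge\mu(\m)$ from the start, so the final exponent $q_1 q_0$ is automatically a $p$-power and no secondary argument is needed.

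Your proposed workaround, by contrast, has a genuine gap. You record the (correct) chain $\m^{dQ}\subseteq(\m^{[Q]})^*$, but then use it to ``reroute $c x^{q_0 Q}\in\m^Q(I^{w+1})^{[q_0 Q]}$'' at exponent $q_0 Q$; that step would require $\m^Q\subseteq(\m^{[Q]})^*$, which is false in general---already in a regular local ring of dimension at least $2$ one has $(\m^{[Q]})^*=\m^{[Q]}$ while $\m^Q\not\subseteq\m^{[Q]}$. The appeal to a uniform test element is likewise an extra hypothesis not assumed in the proposition and not used in the paper's argument. So drop the workaround entirely and just enlarge $d$ to a $p$-power at least $\mu(\m)$.
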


\begin{proof}
Without loss of generality $R$ is an integral domain, since the special parts of both 
the integral and tight closures can be computed modulo minimal primes.

Suppose $0 \neq x \in \spintcl{(I^{n+w})}$, where $n = \mu(I)$.  Then by Lemma~\ref{lem:spbarasymptotic},
there is some power $q_1$ of $p$ with $q_1 \geq \mu(I)$ such that 
$x^{q_1} \in \overline{\m I^{q_1 (n+w)}}$.  Let $q_0$ be a power of $p$ such that $q_0 \geq \mu(\m)$.
Then there exists some integer
$k$ such that for all powers $q$ of $p$, \begin{align*}
x^{q_1 k} (x^{q_1 q_0})^{q} &= x^{q_1(k + q q_0)} \in \left(\m I^{(n+w)q_1}\right)^{q q_0} \\
&\subseteq \m^{q q_0} \left( I^{n+w} \right)^{q q_1 q_0} 
 \subseteq \left(\m^{[q]}\right)^{q_0 - \mu(\m) + 1} \left(I^{[q q_1 q_0]}\right)^{w+1} \\
&\subseteq \left( \m \left( I^{w+1} \right)^{[q_1 q_0]}\right)^{[q]}.
\end{align*}
Hence, $x^{q_1 q_0} \in \left(\m (I^{w+1})^{[q_1 q_0]}\right)^*$, which shows
that $x \in \sptc{(I^{w+1})}$.
\end{proof}

\begin{cor}[Special Brian\c{c}on-Skoda theorem in characteristic $p$]\label{cor:spbc}
Let $(R,\m)$ be a Noetherian regular (or weakly $F$-regular) local ring
of characteristic $p>0$, $n = \mu(I)$, and $w$ any nonnegative
integer.  Then \[
\spintcl{\left( I^{n+w}\right)} \subseteq \m I^{w+1}.
\]
\end{cor}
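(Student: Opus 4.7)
The plan is to deduce the corollary from Proposition~\ref{prop:spbc} via the intermediate claim that, for every ideal $J$ of a weakly $F$-regular ring $R$, one has $\sptc{J} = \m J$. Granting this claim with $J := I^{w+1}$, and recalling that regular local rings are weakly $F$-regular, Proposition~\ref{prop:spbc} yields
\[
\spintcl{(I^{n+w})} \;\subseteq\; \sptc{(I^{w+1})} \;=\; \m I^{w+1},
\]
which is the desired inclusion.

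For the claim $\sptc{J} = \m J$, observe first that weak $F$-regularity forces $K^{*} = K$ for every ideal $K$ of $R$. Consequently, a generating tuple $z_1, \dotsc, z_t$ of $J$ is $*$-independent in the sense of the paper exactly when $z_i \notin (z_j : j \neq i)$ for each $i$, which is nothing other than the condition that $z_1, \dotsc, z_t$ be a minimal generating set of $J$. Since $R$ is Noetherian local, $J$ admits a minimal generating set, so $J$ is $*$-independent.

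Now Lemma~\ref{lem:capmL} applied with $c = *$ gives $\m J = J \cap \sptc{J}$. Axiom (\ref{ax:contmL}) of Definition~\ref{def:csp}, combined with $J^{*} = J$, yields $\sptc{J} \subseteq J^{*} = J$, and hence $\sptc{J} = \sptc{J} \cap J = \m J$, completing the proof of the claim.

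The only conceptual step is recognizing that weak $F$-regularity collapses $*$-independence into ordinary minimal generation; once that is in hand, the corollary is a bookkeeping consequence of Proposition~\ref{prop:spbc} together with the axiomatic machinery of Section~2. No additional characteristic-$p$ input (Frobenius flatness, perfect residue fields, etc.) is required, which is precisely what allows the uniform statement for both regular and weakly $F$-regular rings.
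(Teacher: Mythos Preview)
Your argument is correct and follows the same route as the paper: apply Proposition~\ref{prop:spbc} and then use that $\sptc{J} = \m J$ in a weakly $F$-regular local ring. The paper simply asserts this last fact, whereas you derive it from the axiomatic machinery (Lemma~\ref{lem:capmL} and axiom~(\ref{ax:contmL})), which is a welcome elaboration but not a different approach.
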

\begin{proof}
This follows from Proposition~\ref{prop:spbc} and the fact that in a weakly $F$-regular
local ring $(R,\m)$, $\sptc{J} = \m J$ for any proper ideal $J$ of $R$.
\end{proof}

It would be interesting to prove the above corollary in the equicharacteristic zero
case as well, by reduction to characteristic $p$, or even 
in mixed characteristic (perhaps using methods of Lipman, Sathaye, Teissier, et. al.).

 \section{The special part of the integral closure of monomial ideals}

For a standard graded ring $S$ over a field $k$, there is a unique homogeneous maximal ideal $\m$, and we may define the special part of the integral closure of a homogeneous ideal $J$ in analogous fashion, namely let $\spintcl{J}$ be the ideal generated by all homogeneous elements $x$ of $S$ such that for some integer $t$, $x^t \in \intcl{(\m J^t)}$.  Then one can show (routinely) that all homogeneous elements of $\spintcl{J}$.  Moreover:

\begin{lemma}
Let $S$ be a standard $\N$-graded Noetherian domain over a field $k$, with irrelevant maximal ideal $\m$.  Let $J$ be a homogeneous ideal of $S$, and let $n$ be the lowest degree
among degrees of elements generating $J$.  Then $\spintcl{J}$ contains no homogeneous elements
of degree less than or equal to $n$.
\end{lemma}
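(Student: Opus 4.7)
The plan is to reduce the statement to a minimum-degree bound for ordinary integral closure of a homogeneous ideal. Take a nonzero homogeneous $x \in \spintcl{J}$ of degree $d$; the goal is to show $d \geq n+1$. By definition there is a positive integer $t$ with $x^t \in \intcl{(\m J^t)}$, so the real work is to control the lowest degree appearing in $\intcl{(\m J^t)}$.

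First I would establish the following sub-lemma: if $K$ is a nonzero proper homogeneous ideal of $S$ generated in degrees $\geq m$, then every nonzero homogeneous element of $\intcl{K}$ has degree $\geq m$. Given a homogeneous $y \in \intcl{K}$ of degree $e$, take an integral equation $y^s + a_1 y^{s-1} + \dotsb + a_s = 0$ with $a_i \in K^i$. Since $S$ is a graded domain and $y$ is homogeneous, only the degree-$se$ components on the left must cancel, so after replacing each $a_i$ by its degree-$ie$ homogeneous component (still in $K^i$, because $K^i$ is homogeneous) I obtain a valid integral relation. Not all of these refined $a_i$ can vanish, or else $y^s = 0$ would force $y = 0$. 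For any surviving nonzero $a_i$, every nonzero homogeneous element of $K^i$ has degree $\geq im$ (because $K^i$ is generated in degrees $\geq im$), which forces $ie \geq im$ and hence $e \geq m$.

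Applying the sub-lemma with $K = \m J^t$ and $m = 1 + tn$ — valid since $\m$ is generated in degree $1$ and $J^t$ in degrees $\geq tn$ — the homogeneous element $x^t$, of degree $td$, satisfies $td \geq 1 + tn$. Hence $d \geq n + 1/t > n$, and since $d \in \N$ we conclude $d \geq n+1$, which is exactly the claim. The one incidental point is that $\spintcl{J}$ itself is a homogeneous ideal (remarked in the paragraph preceding the lemma, and following from homogeneity of ordinary integral closures of homogeneous ideals), so it is enough to verify the statement for homogeneous $x$. I do not foresee any genuine obstacle; the only care needed is the grading bookkeeping when projecting the integral equation onto homogeneous components, and noticing that the strict inequality $d > n$ together with $d \in \N$ upgrades to $d \geq n+1$.
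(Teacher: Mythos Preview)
Your proof is correct and takes essentially the same route as the paper: translate $x^t \in \overline{\m J^t}$ into a concrete algebraic relation and do a degree count to force $td > tn$. The only difference is that you invoke the monic-polynomial form of integral dependence (and cleanly isolate the sub-lemma that $\overline{K}$ has no nonzero homogeneous elements below the minimum generating degree of $K$), whereas the paper uses the reduction form $x^{tk} \in \m J^t\,(\m J^t, x^t)^{k-1}$ and carries out the degree arithmetic directly; the two characterizations lead to the same inequality.
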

\begin{proof}
Let $x$ be a homogeneous element of $\spintcl{J}$, and let $d$ be its degree.  Then there is some integer $t>0$ with $x^t \in \intcl{(\m J^t)}$.  
Hence there is some positive integer $k$ such that \[
x^{t k} \in \m I^t \left(\m J^t, x^t\right)^{k-1}.
\]
The expression on the left-hand side has degree $d t k$.  On the other hand, any element 
of the expression on the right-hand side has degree greater than or equal to \[
1 + n t + (k-1) \min\{1 + n t, d t \}.
\]
So if $d \leq n$, then \[
d t k \geq 1 + n t + (k-1) (d t) \geq 1 + k d t,
\]
which is a contradiction.
\end{proof}

\noindent \textit{Convention: } For the rest of this section, we will fix a polynomial ring $R = k[x_1, \dotsc, x_n]$ in $n$ variables, using the
standard $\N$-grading, with $k$ a field.  Let $\m$ be the homogeneous maximal ideal $(x_1, \dotsc, x_n)$.

Note that if $f \in I$, where $I$ is a monomial ideal, then if we express 
$f = c_1 m_1 + \cdots + c_r m_r$ where the $m_i$ 
are monomials in the variables $x_j$ and the $c_i \in k$ (any such $f$ has a unique such
expression, of course), then $m_i \in I$ for all $i$.  This is due to the $\N^n$-(multi)graded nature of the polynomial ring $R$.  We will use this
fact repeatedly, sometimes without mentioning it.

It is folk knowledge (see, e.g. \cite[Exercises 4.22-23]{Eis-CAbook}) that if we let
$\Gamma(I)$ denote the set of exponents (as elements of $\N^n$) of the monomials in a
monomial ideal $I$, then \[
\Gamma(\intcl{I}) = \func{conv}(\Gamma(I)) \cap \N^n,
\]
where ``conv'' denotes the convex hull of a subset of $\R^n$.

Another way of expressing this
set is as follows.  Let $\{ x^{\beta_1}, \dotsc, x^{\beta_r} \}$ be a minimal set
of generators of $I$.  (Here we use double subscripting, so that $\beta_{i,j}$ is the exponent of
$x_j$ in the monomial $m_i$.)  Then for a monomial $x^\alpha$,
$\alpha \in \Gamma(\intcl{I})$ if and only if there exist nonnegative rational numbers $c_1, \dotsc, c_r$
such that \begin{equation}\label{eq:intmon}
\sum_{i=1}^r c_i = 1 \quad \text{and} \quad \alpha \geq \sum_{i=1}^r c_i \beta_i.
\end{equation}
The partial ordering on $\R^n$ we use is the standard one, where $\gamma \geq \delta$ if
$\gamma_i \geq \delta_i$ for all $i$, and $\gamma > \delta$ means both that $\gamma \geq \delta$
and that $\gamma \neq \delta$.

With this latter characterization of integral closure of a monomial ideal, we are ready
to describe the special part of the integral closure in similar terms.

\begin{prop}\label{prop:spintmon}
Let $R = k[x_1, \dotsc, x_n]$, where $k$ is a field, and $\m = (x_1, \dotsc, x_n)$.
Let $I$ be a monomial ideal of $R$ contained in $\m$, minimally generated by
monomials $\{x^{\beta_1}, \dotsc, x^{\beta_r} \}$.  Then $\spintcl{I}$ is also a monomial
ideal, and for a monomial $x^\alpha$, $\alpha \in \Gamma(\spintcl{I})$ if and only if
there exist nonnegative rational numbers $c_1, \dotsc, c_r$ such that \begin{equation}\label{eq:spintmon}
\sum_{i=1}^r c_i = 1 \quad \text{and} \quad \alpha > \sum_{i=1}^r c_i \beta_i.
\end{equation}
\end{prop}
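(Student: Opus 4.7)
Let $\Gamma'$ denote the right-hand side condition of (\ref{eq:spintmon}) and let $J$ be the $k$-span of $\{x^\alpha : \alpha \in \Gamma'\}$. My plan is to prove $J = \spintcl{I}$, which will simultaneously yield both claims of the proposition (that $\spintcl{I}$ is monomial and the exponent characterization). A quick calculation shows that $\Gamma'$ is upward-closed in $\N^n$: if $\alpha' \geq \alpha$ and $\alpha > \sum c_i \beta_i$, then equality $\alpha' = \sum c_i \beta_i$ would force $\alpha = \sum c_i \beta_i$, contradicting the strict inequality. So $J$ is indeed a monomial ideal.

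The easier inclusion $J \subseteq \spintcl{I}$ I will handle directly. Given $\alpha \in \Gamma'$, clear denominators so $a_i := N c_i \in \N$ and $\sum a_i = N$. The relation $\alpha > \sum c_i \beta_i$ then reads $N\alpha \geq \sum a_i \beta_i$ coordinatewise in $\N^n$ with at least one strict gap (of size $\geq 1$), so $N\alpha \geq e_k + \sum a_i \beta_i$ for some $k$. Since the monomial $x_k \prod_i (x^{\beta_i})^{a_i}$ lies in $\m I^N$, it follows that $x^{N\alpha} \in \m I^N \subseteq \overline{\m I^N}$, whence $x^\alpha \in \spintcl{I}$.

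The hard direction is $\spintcl{I} \subseteq J$. By the definition of special integral closure for the graded ring $R$, it suffices to show that each standard-homogeneous generator $f$ of $\spintcl{I}$, satisfying $f^t \in \overline{\m I^t}$ for some $t$, lies in $J$. Write $f = \sum_{j=1}^s c_j x^{\gamma_j}$ with $c_j \in k^*$ and distinct $\gamma_j$, and argue by induction on $s$ that every $\gamma_j \in \Gamma'$. For the inductive step, pick a vertex $\gamma_k$ of $\func{conv}\{\gamma_1, \dotsc, \gamma_s\}$. The vertex property forces the only $\N$-solution to $\sum n_j \gamma_j = t\gamma_k$ with $\sum n_j = t$ to be $n_k = t$, $n_j = 0$ for $j \neq k$, so the coefficient of $x^{t\gamma_k}$ in the multinomial expansion of $f^t$ equals $c_k^t \neq 0$; since $\overline{\m I^t}$ is a monomial ideal, this forces $x^{t\gamma_k} \in \overline{\m I^t}$. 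Applying the Newton-polyhedron formula (\ref{eq:intmon}) to the generators $e_l + \sum_i a_i \beta_i$ ($\sum a_i = t$) of $\m I^t$ yields nonnegative rationals $p_l, b_i$ with $\sum p_l = \sum b_i = 1$ and $t\gamma_k \geq \sum_l p_l e_l + t \sum_i b_i \beta_i$. Since some $p_l > 0$, dividing by $t$ gives $\gamma_k > \sum_i b_i \beta_i$, so $\gamma_k \in \Gamma'$. Then $c_k x^{\gamma_k} \in J \subseteq \spintcl{I}$, so $f - c_k x^{\gamma_k} \in \spintcl{I}$ is still homogeneous with one fewer distinct multidegree, and the induction proceeds. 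The main obstacle throughout is the potential cancellation in the multinomial expansion of $f^t$; the vertex-of-convex-hull trick is precisely what guarantees a single nonvanishing coefficient one can pin down to a specific multidegree, converting the opaque membership $f^t \in \overline{\m I^t}$ into the per-multidegree statement needed for the induction.
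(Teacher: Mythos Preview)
Your argument is correct and takes a genuinely different route from the paper's in two respects. First, for the forward direction on a single monomial, the paper invokes a result of Vitulli to pass from $x^{t\alpha}\in\overline{\m I^t}$ to $x^{q\alpha}\in\m I^q$ (with no closure bar) before reading off the strict exponent inequality; you instead apply the Newton--polytope description (\ref{eq:intmon}) directly to $\overline{\m I^t}$, observing that the $\m$-factor forces a strictly positive term $\frac{1}{t}\sum_l p_l e_l$ and hence the strict inequality. This makes your argument more self-contained. Second, and more interestingly, the paper's proof that $\spintcl{I}$ is monomial simply asserts that if $f=\sum b_i m_i$ is homogeneous with $f^t\in\overline{\m I^t}$ then each $m_i^t\in\overline{\m I^t}$; but this ignores possible cancellation in the multinomial expansion of $f^t$ (over a field of characteristic zero one can choose the $b_i$ so that a given non-extremal $m_i^t$ appears with coefficient zero). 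Your vertex-of-convex-hull device is exactly the remedy: at an extreme point $\gamma_k$ the monomial $x^{t\gamma_k}$ \emph{cannot} cancel, so it survives with coefficient $c_k^t\neq 0$.

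One small point to tighten in your induction: after peeling off $c_k x^{\gamma_k}$ you need the remainder $g=f-c_k x^{\gamma_k}$ to satisfy $g^{t'}\in\overline{\m I^{t'}}$ for some $t'$, not merely $g\in\spintcl{I}$, since that is your inductive hypothesis. This follows because your easy inclusion already gives $(x^{\gamma_k})^N\in\m I^N$ for some $N$, so both $f$ and $c_k x^{\gamma_k}$ satisfy the power condition; then the same domain argument used earlier in the paper to show $\spintcl{I}$ is closed under sums (Lemma~\ref{lem:spbarasymptotic} plus the $c^2(x+y)^n$ trick) yields the condition for their difference. The paper sweeps this under the phrase ``one can show (routinely) that all homogeneous elements'' have the defining property; you should either invoke that or insert the one-line justification above.
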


\begin{proof}
First we show that $\spintcl{I}$ is a monomial ideal.  Let $f = b_1 m_1 + \cdots 
+ b_u m_u \in \spintcl{I}$, where the $m_i$ are distinct monomials and $0 \neq b_i \in k$
for all $i$.  Then there is some positive integer $t$ such that $f^t \in \overline{\m I^t}$.
In particular, since the latter is a monomial ideal, $m_i^t \in \overline{\m I^t}$ for
$i = 1, \dotsc, r$, which means that $m_i \in \spintcl{I}$ for each $i$.

Now, let $\alpha \in \N^n$, and suppose that $x^\alpha \in \spintcl{I}$.  Then there is
some $t$ with $x^{t \alpha} \in \overline{\m I^t}$.  Then by Vitulli \cite[Corollary 3.2]{Vitnormal}, there is some $s$ with
$x^{s t \alpha} \in \m^s I^{s t}$.  In particular, there exists a positive integer $q$
with $x^{q \alpha} \in \m I^q$.  This means that $x^{q \alpha}$ is a multiple of one of the
generating monomials of $\m I^q$.  In particular, there exist nonnegative integers
$p_1, \dotsc, p_r$ such that \begin{equation}\label{eq:spintmonint}
\sum_{i=1}^r p_i = q \quad \text{and} \quad q \alpha > p_1 \beta_1 + \cdots + p_r \beta_r.
\end{equation}
(The ``$>$'' is because $x^{q \alpha} \in \m I^q$, and not merely in $I^q$.)  Then dividing
through by $q$ and letting $c_i = p_i / q$, we get (\ref{eq:spintmon}).

Conversely, suppose that $\alpha$ and $c_1, \dotsc, c_r$ satisfy (\ref{eq:spintmon}).  Since the
$c_i$ are rational, they have a common denominator, say $q$, so that there are nonnegative
integers $p_1, \dotsc, p_r$ such that $c_i = p_i / q$ for each $i$, satisfying (\ref{eq:spintmonint}).
Hence $x^{q \alpha} \in \m I^q$, which means that $x^\alpha \in \spintcl{I}$.
\end{proof}

We use this to tell us exactly when special decomposition of integral closure fails
for monomial ideals.  First, for any subset $C$ of $\R^n$, let \[
\func{low}(C) := \{P \in C \mid Q \not < P \text{ for all } Q \in C \},
\]
the ``lowest points'' of $C$.

\begin{cor}
Let $R = k[x_1, \dotsc, x_n]$, where $k$ is a field, and $\m = (x_1, \dotsc, x_n)$.
Let $I$ be an ideal of $R$ minimally generated by distinct nontrivial
monomials $\{x^{\beta_1}, \dotsc, x^{\beta_r} \}$.  Let $S = \{ \beta_1, \dotsc, \beta_r \}$.
Then $\Gamma(\intcl{I})$ is the disjoint union of $\Gamma(\spintcl{I})$ with 
$\func{low}(\func{conv}(S)) \cap \N^n$.  Hence, 
$\intcl{I} = I + \spintcl{I}$ if and only if $S = \func{low}(\func{conv}(S)) \cap \N^n$.
\end{cor}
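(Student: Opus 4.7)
The plan is to reduce both parts to the convex-geometric descriptions of $\Gamma(\intcl{I})$ and $\Gamma(\spintcl{I})$. By the description in equation~(\ref{eq:intmon}), $\alpha \in \Gamma(\intcl{I})$ iff there exist $c_i \geq 0$ with $\sum c_i = 1$ and $\alpha \geq \sum c_i \beta_i$, while Proposition~\ref{prop:spintmon} gives the same condition with the strict inequality ``$>$'' in place of ``$\geq$''. The two conditions differ precisely on those $\alpha$ for which the weak inequality is tight in every possible convex decomposition---equivalently, $\alpha$ itself lies in $\func{conv}(S)$ and no point of $\func{conv}(S)$ lies strictly below it, which is the definition of $\func{low}(\func{conv}(S)) \cap \N^n$.

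For the disjoint union I would verify $\Gamma(\spintcl{I}) \cup (\func{low}(\func{conv}(S)) \cap \N^n) \subseteq \Gamma(\intcl{I})$ using $\spintcl{I} \subseteq \intcl{I}$ (axiom~(\ref{ax:contmL}) of Definition~\ref{def:csp}) together with $\func{low}(\func{conv}(S)) \subseteq \func{conv}(S) \subseteq \Gamma(\intcl{I})$, and then use the dichotomy from the previous paragraph for the reverse inclusion. Disjointness is automatic, because an element of $\func{low}(\func{conv}(S))$ admits no strictly smaller point in $\func{conv}(S)$ and hence cannot lie in $\Gamma(\spintcl{I})$.

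For the ``hence'' clause, the disjoint union turns $\intcl{I} = I + \spintcl{I}$ into the single inclusion $\func{low}(\func{conv}(S)) \cap \N^n \subseteq \Gamma(I)$. The inclusion $\func{low}(\func{conv}(S)) \cap \N^n \subseteq S$ then follows quickly: any $\alpha$ on the left dominates some $\beta_j$, and lowness of $\alpha$ forbids $\beta_j < \alpha$, forcing $\alpha = \beta_j \in S$. The main obstacle will be the opposite inclusion $S \subseteq \func{low}(\func{conv}(S)) \cap \N^n$: if some $\beta_i \in S$ fails to be low there is a (possibly non-integer) witness $Q \in \func{conv}(S)$ with $Q < \beta_i$, and I would need to manufacture an integer low point $\alpha < \beta_i$ by descending from $\beta_i$ along a coordinate where the strict inequality occurs, then invoke the minimality of the generating set $S$ to rule out any $\beta_j \leq \alpha < \beta_i$ and obtain the desired contradiction.
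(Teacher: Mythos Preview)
The paper states this corollary without proof, treating it as immediate from Proposition~\ref{prop:spintmon} and the convex-hull description of $\Gamma(\intcl{I})$, so there is no argument in the paper to compare against beyond those ingredients. Your proof of the disjoint-union statement is correct and is exactly what is intended: a lattice point $\alpha$ lies in $\Gamma(\intcl{I})\setminus\Gamma(\spintcl{I})$ precisely when some $P\in\func{conv}(S)$ satisfies $P\le\alpha$ but none satisfies $P<\alpha$, which forces $\alpha=P$ and hence $\alpha\in\func{low}(\func{conv}(S))\cap\N^n$. Your reduction of the ``hence'' clause to the single inclusion $\func{low}(\func{conv}(S))\cap\N^n\subseteq\Gamma(I)$, and then (via lowness plus minimality of the $\beta_j$) to $\func{low}(\func{conv}(S))\cap\N^n\subseteq S$, is also correct.

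The one place where your sketch is not yet a proof is the remaining implication, that $\func{low}(\func{conv}(S))\cap\N^n\subseteq S$ forces $S\subseteq\func{low}(\func{conv}(S))$. Your plan---given $\beta_i$ not low, witnessed by $Q\in\func{conv}(S)$ with $Q<\beta_i$, step down to $\alpha=\beta_i-e_{j_0}$ along a coordinate where $Q_{j_0}<\beta_{i,j_0}$---does not guarantee that $\alpha$ still dominates a point of $\func{conv}(S)$: the witness $Q$ is only rational, so one may well have $\beta_{i,j_0}-1<Q_{j_0}<\beta_{i,j_0}$, in which case $\alpha\notin\Gamma(\intcl{I})$ and the descent leaves the picture entirely rather than landing in $\func{low}(\func{conv}(S))$. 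What you actually need is an \emph{integer} low point strictly below $\beta_i$, and producing one takes a bit more than a coordinate decrement. (For instance, one can clear denominators as in the proof of Proposition~\ref{prop:spintmon} to get $q\beta_i>\sum_j p_j\beta_j$ with $\sum p_j=q$ and then argue with integer data, or minimize the coordinate sum over $\{P\in\func{conv}(S):P\le\beta_i\}$ and chase integrality; either way the step is not as automatic as your sentence suggests.) The example $S=\{(2,0,0),(0,2,0),(0,0,2),(1,1,1)\}$, where $(1,1,1)$ is a minimal generator that is not low, is a good test case for whatever argument you supply.
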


For example, if $I = (x^t, y^t)$, we have $\spintcl{I} = \m I$, but $\intcl{I} = (x,y)^t$,
so the decomposition fails if $t > 1$.  In general, if $I = (x_1^{p_1}, \dotsc, x_n^{p_n})$
for integers $p_i$,
then the decomposition holds if and only if whenever $1 \leq i < j \leq n$, $\func{gcd}(p_i,p_j) = 1$.

 \section{Intersections and compatibility}

\begin{lemma}
Let $I$ be a proper ideal in the local ring $(R,\m)$ of characteristic $p>0$.  Then 
$\spintcl{I} \cap I^* = \sptc{I}$.
\end{lemma}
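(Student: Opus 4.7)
The inclusion $\sptc{I} \subseteq \spintcl{I} \cap I^*$ is essentially already established: the first proposition of Section 5 gives $\sptc{I} \subseteq \spintcl{I}$, and the containment $\m I^{[q]} \subseteq I^{[q]}$ together with persistence of tight closure yields $\sptc{I} \subseteq I^*$ (if $x^q \in (\m I^{[q]})^*$, a witness $c$ satisfies $c x^{qq'} \in I^{[qq']}$ for $q' \gg 0$, whence $x \in I^*$).

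For the reverse, let $x \in \spintcl{I} \cap I^*$. Since both closures compute modulo minimal primes, I would first reduce to the case where $R$ is a complete local domain. From $x \in \spintcl{I}$ and Lemma~\ref{lem:spbarasymptotic}, $x^q \in \overline{\m I^q}$ for every sufficiently large power $q$ of $p$. For any valuation $v$ one has $v(\m I^q) = v(\m) + q v(I) = v(\m I^{[q]})$, so by Lemma~\ref{lem:Lipman} the ideals $\m I^q$ and $\m I^{[q]}$ have the same integral closure; hence $x^q \in \overline{\m I^{[q]}}$. From $x \in I^*$, choose a nonzero test element $c$ with $c x^q \in I^{[q]}$ for $q \gg 0$, yielding $c x^q \in I^{[q]} \cap \overline{\m I^{[q]}}$.

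The plan is now to show $I^{[q]} \cap \overline{\m I^{[q]}} \subseteq (\m I^{[q]})^*$. Writing $c x^q = \sum_i a_i y_i^q$ with $y_1, \dots, y_\nu$ a set of generators of $I$, the characteristic-$p$ identity $(c x^q)^{q'} = \sum_i a_i^{q'} y_i^{q q'}$ together with the equivalence $a_i \in \m \Leftrightarrow a_i^{q'} \in \m^{[q']}$ means it suffices to force each $a_i$ into $\m$. I would do this via a valuative comparison using Lemma~\ref{lem:Lipman} and the strict inequality $v(x) > v(I)$ on $\m$-centered valuations (cf.\ Proposition~\ref{prop:Reesvalsp} for the valuative content of $\spintcl{}$). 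Once in hand, $c x^q \in \m I^{[q]}$, and then raising to Frobenius powers gives $(c x^q)^{q'} \in (\m I^{[q]})^{[q']}$ for all $q' \geq 1$; test-element bookkeeping (replacing the $q'$-dependent factor $c^{q'}$ by a fixed witness obtained by combining $c$ with its persistence on tight closures) finally produces $x^q \in (\m I^{[q]})^*$, i.e., $x \in \sptc{I}$.

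The main obstacle will be the valuative forcing step that converts the aggregate integral-closure membership of $c x^q$ in $\overline{\m I^{[q]}}$ into pointwise $\m$-membership of each coefficient $a_i$ of its expansion over the generators $y_i^q$. A secondary but nontrivial issue is clearing the leading $c$ at the end, since in general $c z \in J^*$ does not imply $z \in J^*$; the cleanest route here is likely to invoke the Hochster--Huneke Brian\c con--Skoda containment $\overline{J^\ell} \subseteq J^*$ applied to $J = \m I^{[q]}$ (with $n$ chosen large via Lemma~\ref{lem:spbarasymptotic}) to convert the integral condition into a tight closure condition uniformly in $q'$, thereby bypassing the need to extract coefficients individually.
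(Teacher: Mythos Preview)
Your easy inclusion and the valuation identity $\overline{\m I^q}=\overline{\m I^{[q]}}$ are fine, but the hard direction has two genuine gaps that are not patched by the remarks you make.

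First, the ``valuative forcing'' step is not a valid move. From $cx^q=\sum_i a_i y_i^q\in\overline{\m I^{[q]}}$ you cannot conclude that each $a_i\in\m$: the decomposition is not unique, and valuations see only the element $cx^q$, not its coefficients relative to a chosen generating set. So there is no mechanism by which $v(x)>v(I)$ forces $a_i\in\m$. Your fallback via Brian\c con--Skoda also does not close the gap: applying $\overline{J^{\ell}}\subseteq J^*$ with $J=\m I^{[q]}$ would require $x^q\in\overline{(\m I^{[q]})^{\ell}}$, i.e.\ $v(x)\ge \ell\,v(\m)/q+\ell\,v(I)$, whereas from $x\in\spintcl{I}$ you only get $v(x)\ge v(\m)/q_1+v(I)$.

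Second, even if you obtained $cx^q\in\m I^{[q]}$, the step from there to $x^q\in(\m I^{[q]})^*$ fails: raising to the $q'$th power gives multiplier $c^{q'}$, which is not fixed, and there is no ``bookkeeping'' that replaces it by a single element. In general $cz\in J^*$ does not imply $z\in J^*$.

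The paper avoids both problems by never attempting to divide out a multiplier. It uses the asymptotic characterization of integral closure to fix a single $d\in R^o$ with $d\,f^{q_1 t}\in(\m I^{q_1})^{t}=\m^{t}I^{q_1 t}$ for all $t$, and the tight-closure test element $c$ with $cf^{Q}\in I^{[Q]}$ for all large $Q$. Taking $Q=qq_1q_2$ puts $cd\,f^{Q}$ simultaneously in $\m^{qq_2}I^{qq_1q_2}$ and in $I^{[qq_1q_2]}$. A pigeonhole comparison of ordinary powers with Frobenius powers (using that $I$ has $\mu(I)$ generators, and similarly for $\m$) then lands $cd\,f^{qq_1q_2}$ in $(\m I^{[q_1q_2]})^{[q]}$ for every $q$, so $cd$ itself witnesses $f^{q_1q_2}\in(\m I^{[q_1q_2]})^*$. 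The key point you are missing is that the integral-closure hypothesis must be exploited in its \emph{asymptotic} form (a single $d$ working for all powers), so that the product $cd$ serves as a fixed tight-closure multiplier.
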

\begin{proof}
Let $f \in \spintcl{I} \cap I^*$.  Then there is some $q_1$ with $f^{q_1} \in \overline{\m I^{q_1}}$,
and some $q_0$ and some $c \in R^o$ with $c f^q \in I^{[q]}$ for all $q \geq q_0$.
Thus, there is some $d \in R^o$ such that for all powers $q, q_2$ of $p$, 
$d f^{q q_1 q_2} \in \m^{q q_2} I^{q q_1 q_2}$.  Then we have: \begin{align*}
c d (f^{q_1 q_2})^q = c d f^{q q_1 q_2} &\in \m^{q q_2} I^{q q_1 q_2} \cap I^{[q q_1 q_2]} \\
&\subseteq \m^{q q_2} (I^{q_2 - \mu(I) + 1})^{[q q_1]} \cap I^{[q q_1 q_2]} \\
&\subseteq \m^{q q_2} I^{[q q_1]} \cap I^{[q q_1 q_2]} \\
&\subseteq \m^{q q_2 - r} I^{[q q_1 q_2]} \\
&\subseteq \m^{[q]} I^{[q q_1 q_2]} = (\m I^{[q_1 q_2]})^{[q]}
\end{align*}
Thus, $f^{q_1 q_2} \in (\m I^{[q_1 q_2]})^*$, so $f \in \sptc{I}$.
\end{proof}

\begin{lemma}
If $(R,\m)$ is an excellent analytically irreducible local ring and $I$ 
is a $*$-independent ideal in $R$, then $\sptc{I} \cap I^F = \spF{I}$.
\end{lemma}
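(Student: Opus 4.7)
The easy containment $\spF I \subseteq \sptc I \cap I^F$ is immediate from axiom (\ref{ax:contmL}) of Definition~\ref{def:csp} together with $\m I^{[q]} \subseteq (\m I^{[q]})^*$, so my plan focuses on the reverse inclusion. Given $f \in \sptc I \cap I^F$, I would fix powers $q_0, q_1$ of $p$ with $f^{q_0} \in I^{[q_0]}$ and $f^{q_1} \in (\m I^{[q_1]})^*$, then consolidate at $Q := q_0 q_1$: one has $f^Q = (f^{q_0})^{q_1} \in I^{[Q]}$, and using the elementary inclusion $(K^*)^{[q]} \subseteq (K^{[q]})^*$ (which follows by applying a tight-closure witness for $K$ to $q$-th powers), also $f^Q = (f^{q_1})^{q_0} \in \bigl((\m I^{[q_1]})^*\bigr)^{[q_0]} \subseteq \bigl(\m^{[q_0]} I^{[Q]}\bigr)^* \subseteq (\m I^{[Q]})^*$. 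So the lemma reduces to proving $J \cap (\m J)^* = \m J$ for the ideal $J := I^{[Q]}$.

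Next I would verify that $J = I^{[Q]}$ is itself $*$-independent: a $*$-independent generating set $z_1, \dotsc, z_n$ of $I$ produces a generating set $z_1^Q, \dotsc, z_n^Q$ of $J$, and any purported $*$-dependence among these $Q$-th powers transfers to a $*$-dependence among the $z_i$ by re-indexing the tight-closure witness along powers of $p$ (using that, for any powers $Q, Q'$ of $p$ with $Q' \geq Q$, the quotient $Q'/Q$ is again a power of $p$, so a witness valid for all $Q' = Qq'$ with $q' \gg 0$ is in fact valid for all sufficiently large powers of $p$). With $J$ now $*$-independent, Lemma~\ref{lem:capmL} delivers $\m J = J \cap \sptc J$, while axioms (\ref{ax:commute}) and (\ref{ax:contmL}) make $\sptc J$ a $*$-closed ideal containing $\m J$. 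Hence $(\m J)^* \subseteq (\sptc J)^* = \sptc J$, and combining, $f^Q \in J \cap (\m J)^* \subseteq J \cap \sptc J = \m J = \m I^{[Q]}$, which witnesses $f \in \spF I$.

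The main conceptual step is the passage of $*$-independence from $I$ to its Frobenius power $I^{[Q]}$; once that is established, everything else reduces to applying the abstract machinery of Lemma~\ref{lem:capmL} together with the $*$-closedness of $\sptc{(-)}$ built into the axioms. The bookkeeping for this transfer, though elementary, requires care because the tight-closure definition quantifies over \emph{all} sufficiently large powers of $p$, not merely those divisible by $Q$; verifying that the available witnesses cover the full quantifier range is the only real subtlety in my approach.
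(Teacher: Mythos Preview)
Your argument is correct and takes a genuinely different route from the paper.  The paper's proof fixes a $*$-independent generating set $f_1,\dotsc,f_d$ of $I$, writes $f^{q_1}=\sum a_i f_i^{q_1}$ explicitly, compares this against the expression for $cf^q$ coming from $f\in\sptc I$, and then applies Aberbach's colon criterion \cite[Proposition 2.4]{AbflatFreg} directly to the resulting syzygy $\sum(ca_i^{q/q_1}-m_{iq})f_i^q=0$ to force each $a_i$ into $\m$.  Your approach instead packages everything through the axiomatic machinery: you reduce to showing $J\cap(\m J)^*=\m J$ for $J=I^{[Q]}$, observe that Frobenius powers preserve $*$-independence (your re-indexing argument is fine, since for fixed $Q=p^e$ the map $q'\mapsto Qq'$ is a bijection from powers of $p$ onto powers of $p$ that are $\geq Q$), and then let Lemma~\ref{lem:capmL} together with axioms~(\ref{ax:contmL}) and~(\ref{ax:commute}) finish.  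This is cleaner and more in keeping with the paper's own axiomatic philosophy, and it isolates the reusable fact that $J\cap\sptc J=\m J$ for any $*$-independent $J$.  The paper's computation, on the other hand, makes the role of the excellent analytically irreducible hypothesis visibly explicit at the moment the colon criterion is invoked; in your version the same hypothesis is still needed but is absorbed into the background claim (Section~3, via \cite{nme*spread}) that $*\rm sp$ satisfies the Nakayama axiom~(\ref{ax:Nak}), which is what Lemma~\ref{lem:capmL} rests on.
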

\begin{proof}
Let $f \in \sptc{I} \cap I^F$, and let $f_1, \dotsc, f_d$ be a $*$-independent
generating set of $I$.  Then there is some power $q_1$ of $p$ such that
$f^{q_1} \in I^{[q_1]}$.  Hence \[
f^{q_1} = \sum_{i=1}^d a_i f_i^{q_1}.
\]
Also, there is some $q_0$ such that $c f^q \in \m^{[q/q_0]} I^{[q]}$ for all $q \gg 0$.
That is, \[
c f^q = \sum_{i=1}^d m_{i q} f_i^q, 
\]
where $m_{i q} \in \m^{[q / q_0]}$
for all such $q$.  On the other hand, from the first displayed equation we also have \[
c f^q = \sum_{i=1}^d c a_i^{q / q_1} f_i^q.
\]
Combining the previous two displayed equations, we have \[
\sum_{i=1}^d (c a_i^{q / q_1} - m_{i q}) f_i^q = 0.
\]
Since $f_1, \dotsc, f_d$ are $*$-independent and the colon criterion \cite[Proposition 2.4]{AbflatFreg} holds in $R$,
there is some power $q_2 \geq \max \{ q_0, q_1 \}$ of $p$ such that
$c a_i^{q / q_1} - m_{i q} \in \m^{[q / q_2]}$, so that $c a_i^{q / q_1} \in \m^{[q/q_2]}$
for all $q \gg 0$ and all $1 \leq i \leq d$.  Hence $a_i^{q_2/q_1} \in \m^* = \m$,
which implies that $a_i \in \m$.  So we have $f^{q_1} \in \m I^{[q_1]}$, 
whence $f \in \spF{I}$.
\end{proof}

\section*{Acknowledgements}
The author wishes to thank Professor Craig Huneke for constant support and advice during the development of these results.


\begin{thebibliography}{Hun86}

\bibitem[Abe01]{AbflatFreg}
Ian~M. Aberbach, \emph{Extension of weakly and strongly {$F$}-regular rings by
  flat maps}, J. Algebra \textbf{241} (2001), 799--807.

\bibitem[BS74]{BriSkoda}
Jo{\"e}l Brian{\c{c}}on and Henri Skoda, \emph{Sur la cl\^oture int\'egrale
  d'un id\'eal de germes de fonctions holomorphes en un point de {${\bf
  C}\sp{n}$}}, C. R. Acad. Sci. Paris S\'er. A \textbf{278} (1974), 949--951.

\bibitem[EH]{EH-cc}
Neil Epstein and Mel Hochster, \emph{Special integral closure, interior
  integral closure, and continuous closure}, in preparation.

\bibitem[Eis95]{Eis-CAbook}
David Eisenbud, \emph{Commutative algebra}, Graduate Texts in Mathematics, vol.
  150, Springer-Verlag, New York, 1995.

\bibitem[EM97]{EisMaz}
David Eisenbud and Barry Mazur, \emph{Evolutions, symbolic squares, and
  {Fitting} ideals}, J. Reine Angew. Math. \textbf{488} (1997), 189--201.

\bibitem[Eps05]{nme*spread}
Neil Epstein, \emph{A tight closure analogue of analytic spread}, Math. Proc.
  Cambridge Philos. Soc. \textbf{139} (2005), no.~2, 371--383.

\bibitem[FV]{FoVa-core}
Louiza Fouli and Janet Vassilev, \emph{The $cl$-core of an ideal}, to appear in
  Math. Proc. Cambridge Philos. Soc.

\bibitem[HH90]{HHmain}
Melvin Hochster and Craig Huneke, \emph{Tight closure, invariant theory, and
  the {Brian\c{c}on}-{Skoda} theorem}, J. Amer. Math Soc. \textbf{3} (1990),
  no.~1, 31--116.

\bibitem[HH99]{HH-tcz}
\bysame, \emph{Tight closure in equal characteristic zero}, book-length
  preprint, 1999.

\bibitem[HS06]{HuSw-book}
Craig Huneke and Irena Swanson, \emph{Integral closure of ideals, rings, and
  modules}, London Math. Soc. Lecture Note Ser., vol. 336, Cambridge Univ.
  Press, Cambridge, 2006.

\bibitem[H{\"u}b99]{H:ueval}
Reinhold H{\"u}bl, \emph{Evolutions and valuations associated to an ideal}, J.
  Reine Angew. Math. \textbf{517} (1999), 81--100.

\bibitem[Hun86]{Hu3D}
Craig Huneke, \emph{The primary components of and integral closures of ideals
  in 3-dimensional regular local rings}, Math. Ann. \textbf{275} (1986),
  617--635.

\bibitem[LS81]{LipSath}
Joseph Lipman and Avinash Sathaye, \emph{Jacobian ideals and a theorem of
  {B}rian\c con-{S}koda}, Michigan Math. J. \textbf{28} (1981), no.~2,
  199--222.

\bibitem[NR54]{NR}
D.~G. Northcott and D.~Rees, \emph{Reductions of ideals in local rings}, Proc.
  Cambridge Philos. Soc. \textbf{50} (1954), no.~2, 145--158.

\bibitem[Ree88]{Rees-book}
D.~Rees, \emph{Lectures on the asymptotic theory of ideals}, London Math. Soc.
  Lecture Note Ser., vol. 113, Cambridge Univ. Press, Cambridge, 1988.

\bibitem[Vas09]{Va-cl}
Janet~C. Vassilev, \emph{Structure on the set of closure operations of a
  commutative ring}, J. Algebra \textbf{321} (2009), no.~10, 2737--2753.

\bibitem[Vit03]{Vitnormal}
Marie~A. Vitulli, \emph{Some normal monomial ideals}, Topics in algebraic and
  noncommutative geometry (Luminy/Annapolis, MD, 2001), Contemp. Math., vol.
  324, Amer. Math. Soc., Providence, RI, 2003, pp.~205--217.

\bibitem[Vra02]{Vr*ind}
Adela Vraciu, \emph{$*$-independence and special tight closure}, J. Algebra
  \textbf{249} (2002), 544--565.

\bibitem[Vra06]{Vr-chains}
\bysame, \emph{Chains and families of tightly closed ideals}, Bull. London
  Math. Soc. \textbf{38} (2006), 201--208.

\end{thebibliography}
\end{document}